\tikzstyle{vertex}=[circle,fill, draw, inner sep=0pt, minimum size=6pt]
\newtheorem{theorem}{\bf Theorem}[section]
\newtheorem{corollary}[theorem]{\bf Corollary}
\newtheorem{lemma}[theorem]{\bf Lemma}
\newtheorem{proposition}[theorem]{\bf Proposition}
\begin{document}

\title{On finite nilpotent groups with the same enhanced power graph}

\let\cleardoublepage\clearpage
\author{M. Mirzargar$^{a,b}$, S. Sorgun$^{a,*}$, M. J. Nadjafi-Arani$^{a,b}$ \\
	\small $^{a}$Department of Mathematics, Nevşehir Hacı Bektaş Veli University, 50300, Nevşehir, Turkey \\
	\small $^{b}$
Faculty of Science, Mahallat Institute of Higher Education, Mahallat, Iran \\
\small $^{*}$Corresponding author \tt{srgnrzs@gmail.com; ssorgun@nevsehir.edu.tr } \\
	\small \tt{m.mirzargar@mahallat.ac.ir} \\
    \small \tt{mjnajafiarani@mahallat.ac.ir } \\
}	
	\maketitle
	
	\begin{abstract}
		\noindent
        The enhanced power graph of a group $G$ is the graph $P_e(G)$ whose vertex set is $G$, such that two distinct vertices $x$ and $y$, are adjacent if $\langle x, y\rangle$ is cyclic.  
		In this paper, we analyze the structure of the enhanced power graph of a finite nilpotent group in terms of the enhanced power graphs of its Sylow subgroups. We establish that for two nilpotent groups, their enhanced power graphs are isomorphic if and only if the enhanced power graphs of their Sylow subgroups are isomorphic. 
        Additionally, we identify specific nilpotent groups for which the enhanced power graphs uniquely characterize the group structure, meaning that if $P_e(G)\cong P_e(H)$ then $G \cong H$. Finally, we extend these results to power graphs and cyclic graphs.
		\bigskip
		\noindent
		
        {\bf Key Words:} enhanced power graph, power graph, nilpotent group, Sylow subgroup. \\
		MSC(2020): Primary: 05C25; Secondary: 20F99.
        \\
	\end{abstract}

\maketitle

\section{Introduction}
The structural interplay between algebraic groups and combinatorial graphs constitutes a foundational nexus in mathematical research. Generating graphs from
 semigroups and groups has a long history. One of the oldest studies of graphs defined on the elements of a group is the Cayley graph \cite{budden1985cayley}.  Various graphs can be defined for a group based on its structural properties, including the commuting graph,  power graph, enhanced power graph, and non-cyclic graph.
 The study of graphs associated with groups and other algebraic structures is highly significant due to their numerous valuable applications, particularly in automata theory, as discussed in books \cite{kelarev2002ring, kelarev2003graph, kelarev2009cayley}. Most of these graphs have natural labelings, making this area a part of the broader research field of graph labelings. For more details, see publications on graph labelings \cite{ngurah2021distance, indriati2020totally}. 

 The enhanced power graph of any group is the simple graph whose vertex set consists of all the elements of the group. Two elements $x$ and $y$ are adjacent if they belong to the same cyclic subgroup.
 The term enhanced power graph of a group was introduced in \cite{aalipour2016structure}. The enhanced power graph is a subgraph that exists as an intermediate structure between the commuting graph 
 and the undirected power graph of the group. Kelarev and Quinn in \cite{kelarev2000combinatorial} introduced the directed power graph of a group as a digraph where vertices represent group elements, and there is a directed edge from $y$ to $x$ if $x$ is a power of $y$. 
 The undirected power graph for  semigroups was
 introduced by Chakrabarty in \cite{chakrabarty2009undirected}. Groups represent a special case of semigroups where these concepts are specifically defined. For a detailed historical overview and the relationships between these concepts, see \cite{cameron2011power, cameron2010power, abawajy2013power, mirzargar2019survey}.
 
 The commuting graph is the graph whose vertex set is all elements of a group, and two distinct elements $x, y$ are adjacent if $xy = yx$. The commuting graph is the complement of the non-commuting graph, a concept initially explored by Erdős \cite{neumann1976problem}. Similarly, the enhanced power graph is closely related to the complement of the non-cyclic graph (which we refer to as the cyclic graph), with a difference in the vertex set.
 The non-cyclic graph of a group, introduced in \cite{abdollahi2007noncyclic,abdollahi2009non}, as the graph with two vertices $x$ and $y$ are adjacent if the subgroup $\langle x, y\rangle$ is non-cyclic. In this definition, isolated vertices are excluded.


  In some papers, the enhanced power graph is also called the cyclic graph \cite{ma2013cyclic}.
  It is also called the deleted enhanced power graph when the identity element is removed \cite{costanzo2021cyclic} or the proper enhanced power graph \cite{bera2022proper} when vertices joined to all other vertices are deleted. The variation in terminology and the close relationship between these graphs have resulted in similar findings appearing in the literature. For instance, Theorem 19 in \cite{ma2013cyclic}, and Corollary 38 in \cite{aalipour2016structure}, present similar results. To avoid this issue, we explicitly mention any results in this paper that are similar to those in other works.

\subsection{ Terminology and notation}

A graph $\Gamma=(V(\Gamma), E(\Gamma))$ is a structure with the vertex set $V(\Gamma)$ and edge set $E(\Gamma)$. If \( \{x, y\} \in E(\Gamma) \), we denote this by \( x \sim y \). The graph
 $\Gamma_1=(V_1, E_1)$ is a \textbf{subgraph} of $\Gamma=(V(\Gamma),E(\Gamma))$, if $V_1\subseteq V(\Gamma)$ and $E_1\subseteq E(\Gamma)$.
If $U\subseteq V(\Gamma)$, then the \textbf{induced subgraph} of $\Gamma$ on $U$ is the graph
 with vertex set $U$ 
 and edge set consisting of all edges of $\Gamma$ that connect vertices in $U$. 
 The \textbf{join of two graphs} $\Gamma_1$ and $\Gamma_2$, denoted by $\Gamma_1\lor \Gamma_2$,  has a vertex set $V(\Gamma_1 \lor \Gamma_2) = V(\Gamma_1) \cup V(\Gamma_2)$ and an edge set $E(\Gamma_1 \lor \Gamma_2) = E(\Gamma_1) \cup E(\Gamma_2) \cup    \{x_1x_2: x_1 \in V(\Gamma_1), x_2 \in V(\Gamma_2)\}$.
A \textbf{complete
 graph}, $K_n$ is a graph with $n$ vertices which all pairs of vertices are joined.  A \textbf{clique} is a set of vertices whose induced subgraph is a complete graph. The maximum size
 of a clique in a graph $\Gamma$ is called the \textbf{clique number} of $\Gamma$.
     A vertex is considered a \textbf{dominating vertex} if it is adjacent to every other vertex in the graph.
  A subset $S\subseteq V(\Gamma)$ is called a
\textbf{dominating set} if every vertex of $V(\Gamma)\setminus S$ is adjacent to some vertex of $S$.  The minimum size of dominating sets of $\Gamma$, denoted by $\gamma(G)$, is called the \textbf{domination number} of $\Gamma$.  A \textbf{star graph} is a graph in which there is a vertex adjacent to all other vertices, with
 no further edges.

Throughout this paper, all groups considered are finite. $|G|$ denotes the cardinality of
 the set $G$ and $e$ denote its identity element of a group. For any two positive integers $m$ and $n$, the greatest common divisor, denoted by $gcd(m, n)$, represents the largest integer that divides both $m$ and $n$ without a remainder. In this case, $m$ and $n$ are also referred to as coprime numbers.
The cyclic group of order $n$ is denoted by $\mathbb{Z}_n$.
The subgroup generated by two vertices $x$ and $y$ is denoted by $\langle x, y \rangle$.
A \textbf{maximal subgroup} of a group is a proper subgroup that is not strictly contained in any other proper subgroup of the group.
A \textbf{maximal cyclic subgroup} of $G$ is a cyclic subgroup that is not properly contained within any other cyclic subgroup of $G$.
 A group $G$ is called a \textbf{$p$-group} for a prime $p$ if its order is given by $|G| = p^r$ for some $r \in \mathbb{N}$.  Thus a $p$-group is a group in which the order of every element is a power of $p$. A \textbf{Sylow $p$-subgroup} of a finite group $G$ is a maximal $p$-subgroup of $G$.

   $G$ is called a \textbf{nilpotent group} if its upper central series eventually terminates at $G$. Alternatively, its central series has finite length, or its lower central series ends at
 $\{e\}$. 

  A well-known characterization of nilpotent groups states that for a finite group $G$, the following conditions are equivalent:
     \begin{itemize}
         \item[$\bullet$] $G$ is nilpotent;
           \item[$\bullet$] $G$ decomposes as the direct product of its Sylow subgroups;
           \item[$\bullet$] For all prime divisors $p$ of $|G|$, the Sylow $p$-subgroup of $G$ is unique.
 \end{itemize}
 It is a well-established fact that every finite $p$-group and all abelian groups are nilpotent.
The symmetric group of degree $n$, denoted by $S_n$, is the group of all permutations of a set of $n$ elements.  For \( n \geq 3 \), the dihedral group of order \( 2n \) is denoted by \( D_{2n} \). It is defined by the presentation:
$$ D_{2n} = \langle a, b \mid  a^n = b^2 = e, b^{-1}ab = a^{-1} \rangle$$

 For $n\geq 2$, the generalized quaternion group $Q_{4n}$ of order $4n$ is given by
 $$ Q_{4n} = \langle x,y\mid x^n =y^2, \ x^{2n} = e, \ y^{-1}xy =x^{-1}\rangle $$
For more information about generalized quaternion groups see \cite{robinson2012course}. 
Definitions of the graphs required in this paper:
 

 \textbf{Power graph} of a group $G$, denoted by $Pow(G)$, is the graph with vertex set $G$, and two distinct vertices  $x$
 and $y$,  are adjacent if  $y\in \langle x\rangle$ or $x\in \langle y\rangle$. 

 The \textbf{enhanced power graph} of a group $G$ is the graph $P_e(G)$, where the vertex set is $G$, and two distinct vertices $x$ and $y$ are adjacent if  $\langle x, y \rangle$ is cyclic.


\textbf{The cyclic graph} of a group $G$, denoted by $C(G)$, is defined as the graph whose vertex set is $G\setminus Cyc(G)$, two vertices $x$ and $y$ are adjacent if $\langle x,y\rangle$ is cyclic. In this definition, isolated vertices are excluded, and these excluded vertices form the set $Cyc(G)$, given by:
  $$ \text{Cyc}(G)=\{x\in G\mid\langle y,x\rangle \text{ is cyclic for all } y\in G\}$$
 This graph is exactly the same as the enhanced power graph when dominating vertices are deleted. On the other hand, the cyclic graph is the induced subgraph of the enhanced power graph on the set $G\setminus Cyc(G)$, where $Cyc(G)$ is the set of dominating vertices of the enhanced power graph. 
\subsection{Motivation and outline of the paper}
This paper investigates the following question:

\textbf{Question:}  What structural similarities exist between the 
 the groups $G$ and $H$ if $P_e(G)\cong P_e(H)$?

We begin by reviewing relevant results in the literature on isomorphic graphs constructed on the elements of a group.
 In \cite{mirzargar2022finite}, we studied a similar question for power graphs, obtaining results on the possible orders of groups with isomorphic power graphs. Additionally, in \cite{abdollahi2007noncyclic}, the same question was explored for non-cyclic graphs. These studies motivated us to extend our investigation to enhanced power graphs. As demonstrated in \cite{cameron2011power, mirzargar2012power}, any two finite abelian groups with isomorphic power graphs must also be isomorphic to each other.
 Furthermore, Corollary 3.1 in \cite{zahirovic2020study} establishes that two groups have isomorphic power graphs if and only if their enhanced power graphs are isomorphic. This result implies that abelian groups with isomorphic enhanced power graphs must also be isomorphic. Additionally, in \cite{cameron2011power} it has been obtained that if $Pow(G)\cong Pow(H)$ then $G$ and $H$ have the same elements of each order.  This result also holds for enhanced power graphs.

In \cite{mirzargar2012power}, it has been established that certain well-known finite groups can be uniquely determined by their power graphs.
 Specifically, it is shown that if $G$ is a generalized quaternion group, a symmetric group, a cyclic group or a dihedral group, and $H$ is a finite group satisfying $Pow(G)\cong Pow(H)$, then $G\cong H$. Since an isomorphism between power graphs is equivalent to an isomorphism between enhanced power graphs, the same conclusion holds for enhanced power graphs. Note that the enhanced power graph of a group is a complete graph if and only if the group is cyclic. Furthermore, the uniqueness of the enhanced power graph for cyclic groups is evident. Thus, the enhanced power graph uniquely determines these groups up to isomorphism:
\begin{proposition}
     Let $G$ be a finite group. Then
    \begin{itemize}
           \item[i.] $P_e(G)\cong P_e(S_n)$ if and only if $G\cong S_n$, where $S_n$
 is the symmetric group of degree $n$.
         \item[ii.] $P_e(G)\cong P_e(D_{2n})$ if and only if $G\cong D_{2n}$, where $D_{2n}$
  is the dihedral group of order  $2n$.
        \item[iii.] $P_e(G)\cong P_e(Q_{4n})$ if and only if $G\cong Q_{4n}$, where $Q_{4n}$ is the generalized quaternion group of order $4n$.
 \end{itemize}
 \end{proposition}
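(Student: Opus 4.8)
\section*{Proof proposal}

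The plan is to obtain all three equivalences by combining two facts already recorded in the introduction, so that essentially no new graph-theoretic work is needed. The reverse implications are immediate in every case: if $G \cong S_n$ (respectively $G \cong D_{2n}$ or $G \cong Q_{4n}$), then the group isomorphism carries cyclic subgroups to cyclic subgroups and therefore induces an isomorphism of the enhanced power graphs, giving $P_e(G) \cong P_e(S_n)$ (respectively $P_e(D_{2n})$, $P_e(Q_{4n})$). Hence the content lies entirely in the forward implications, and I would treat the three items by a single uniform argument.

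For the forward direction, first I would use the bridge between the two graph invariants. By Corollary~3.1 of \cite{zahirovic2020study}, for arbitrary finite groups $G$ and $H$ one has $Pow(G) \cong Pow(H)$ if and only if $P_e(G) \cong P_e(H)$. Thus, starting from $P_e(G) \cong P_e(S_n)$, this equivalence immediately yields $Pow(G) \cong Pow(S_n)$, and similarly in the dihedral and generalized quaternion cases. The second step is then to invoke the power-graph recognition result from \cite{mirzargar2012power}: if $K$ is a symmetric group, a dihedral group, or a generalized quaternion group and $H$ is any finite group with $Pow(H) \cong Pow(K)$, then $H \cong K$. Applying this with $K = S_n$ (respectively $D_{2n}$, $Q_{4n}$) and $H = G$ gives $G \cong S_n$ (respectively $D_{2n}$, $Q_{4n}$), which closes the forward direction and hence establishes each equivalence.

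I expect the main obstacle to be not conceptual but one of bookkeeping: verifying that the hypotheses of both cited results genuinely apply to an \emph{arbitrary} finite group $G$, rather than, say, only to nilpotent groups, so that no hidden restriction is introduced when the two results are chained together. In particular, I would confirm that the statement of Corollary~3.1 in \cite{zahirovic2020study} is quantified over all finite groups and that the recognition theorem of \cite{mirzargar2012power} is stated for each of the three families $S_n$, $D_{2n}$, $Q_{4n}$ without auxiliary assumptions on $H$. Once both are checked to have the required generality, the proof is the two-line composition described above, uniform across items (i)--(iii).
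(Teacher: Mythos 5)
Your proposal is correct and follows essentially the same route as the paper: the paper also derives the proposition by combining the recognition result for power graphs of symmetric, dihedral, and generalized quaternion groups from \cite{mirzargar2012power} with the equivalence between power graph and enhanced power graph isomorphism from Corollary~3.1 of \cite{zahirovic2020study}. Both cited results are indeed stated for arbitrary finite groups, so the composition goes through exactly as you describe.
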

 In \cite{ma2013cyclic}, Theorem 32, the same result is proven only for dihedral group $D_{2n}$ under the name of the "cyclic graph", where the cyclic graph in \cite{ma2013cyclic} is equivalent to the enhanced power graph. This paper is structured as follows: 
 
 In Section 2.1, we explore whether a group is uniquely determined by its enhanced power graph. Specifically, we investigate the conditions under which $P_e(G)\cong P_e(H)$ implies that $G\cong H $. In general, this does not always hold, so we focus on analyzing this question within the class of nilpotent groups.
Our main result establishes a structural property of nilpotent groups with respect to their enhanced power graphs.
\begin{theorem}\label{nilpotent}
     Let $G$ be a nilpotent group and $H$  an arbitrary group. Then $P_e(G)\cong P_e(H)$ if and only if $H$ is also nilpotent and $P_e(G_{P}) \cong P_e(H_{Q})$, for every prime $p$,  where $G_{P}$ and $H_{Q}$ represent the Sylow $p$-subgroups of $G$ and $H$, respectively.
\end{theorem}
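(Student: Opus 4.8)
The plan is to reduce everything to one structural fact — that for a nilpotent group the enhanced power graph is built coordinatewise from those of its Sylow subgroups — and then to control nilpotency on the $H$ side by a counting argument. First I would isolate the adjacency lemma. If $G=A\times B$ with $\gcd(|A|,|B|)=1$, every subgroup of $G$ splits as the product of its intersections with $A$ and $B$, so $\langle(a_1,b_1),(a_2,b_2)\rangle=\langle a_1,a_2\rangle\times\langle b_1,b_2\rangle$; a product of coprime-order subgroups is cyclic exactly when both factors are, whence $(a_1,b_1)\sim(a_2,b_2)$ in $P_e(G)$ iff $a_1,a_2$ are equal or adjacent in $P_e(A)$ and simultaneously $b_1,b_2$ are equal or adjacent in $P_e(B)$. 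Writing $\widehat{P_e}$ for $P_e$ with a loop attached at every vertex, this says $\widehat{P_e}(A\times B)\cong\widehat{P_e}(A)\times\widehat{P_e}(B)$ (categorical product); iterating over $G=\prod_p G_p$ gives $\widehat{P_e}(G)\cong\prod_p\widehat{P_e}(G_p)$.

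The ``if'' direction is then immediate: given a nilpotent $H=\prod_p H_p$ and isomorphisms $\psi_p\colon P_e(G_p)\to P_e(H_p)$, each $\psi_p$ carries equal pairs to equal pairs and adjacent pairs to adjacent pairs in both directions, so $\prod_p\psi_p$ respects the coordinatewise ``equal-or-adjacent'' relation and is, by the lemma, an isomorphism $P_e(G)\to P_e(H)$. Here the hypothesis already forces $G$ and $H$ to share the same prime divisors, since a trivial Sylow subgroup contributes a single vertex whereas $P_e(\mathbb Z_p)=K_p$.

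For ``only if'' I would proceed in three steps. By the results quoted in the introduction, $P_e(G)\cong P_e(H)$ forces $|G|=|H|$ and the same number of elements of each order, hence the same number of elements of $p$-power order for each prime $p$. To see $H$ is nilpotent, recall the elements of $p$-power order form the union of the Sylow $p$-subgroups, so by Frobenius's theorem their number is a multiple of $|H|_p$, equal to $|H|_p$ exactly when the Sylow $p$-subgroup is normal; since $G$ is nilpotent this count equals $|G|_p=|H|_p$, so the matching count for $H$ makes every Sylow subgroup of $H$ normal and $H$ nilpotent. With both groups nilpotent, the subgraph of $P_e(G)$ induced on the $p$-power-order elements is exactly $P_e(G_p)$ (these elements are precisely $G_p$, and by the lemma their adjacencies are the internal ones), and likewise for $H$; it therefore suffices to produce a single isomorphism $P_e(G)\to P_e(H)$ carrying $G_p$ onto $H_p$ for every $p$, and then restrict.

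The hard part will be this last reduction. The given isomorphism need not respect orders at all — inside a cyclic group $P_e$ is complete, so its automorphisms permute orders freely — and tensor-product factorizations are not unique in general, so the factors cannot simply be read off. My intended resolution is to upgrade the map to an order-respecting one: following Cameron's analysis one recovers the divisibility (directed) structure, and hence each order $|x|=|\langle x\rangle|$, from the enhanced power graph, and an order-preserving isomorphism automatically sends $G_p$ onto $H_p$, after which restriction yields $P_e(G_p)\cong P_e(H_p)$. Converting ``orders are matched'' into a genuine order-preserving enhanced power graph isomorphism on each Sylow subgraph is the delicate point where I expect the bulk of the work to lie.
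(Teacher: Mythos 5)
Your proposal is correct, but its ``only if'' direction takes a genuinely different route from the paper --- and a more robust one. The paper's own proof tries to read the Sylow-level maps directly off the given isomorphism $\phi\colon P_e(G)\to P_e(H)$: it ``defines'' $\psi_i$ by sending the $i$-th coordinate of $x$ to the $i$-th coordinate of $\phi(x)$, and argues adjacency is preserved by applying $\phi$ to elements supported in a single coordinate. That argument presupposes that $\phi$ respects the direct-product coordinates, which an arbitrary enhanced-power-graph isomorphism need not do --- precisely the obstruction you identify (inside a cyclic group $P_e$ is complete, so isomorphisms can scramble orders and coordinates freely). Your fix --- upgrade to an order-preserving isomorphism, observe that in a nilpotent group the elements of $p$-power order are exactly the Sylow $p$-subgroup and induce $P_e(G_{P})$, then restrict --- is the rigorous way to close this step. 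Moreover, the point you flag as the expected ``bulk of the work'' is in fact immediate from the paper's own Corollary~\ref{cor} (Zahirovi\'{c} et al.): that corollary yields an isomorphism of \emph{directed} power graphs; such a map preserves $|\langle x\rangle|$, since the closed out-neighbourhood of $x$ is $\langle x\rangle$, hence it carries $G_{P}$ onto $H_{Q}$; and its restriction is an isomorphism of the directed power graphs of the Sylow subgroups, which converts back to $P_e(G_{P})\cong P_e(H_{Q})$ by the same corollary (alternatively, $x\sim y$ in $P_e$ iff $x$ and $y$ are out-neighbours of a common $z$, so a directed-power-graph isomorphism is already a $P_e$-isomorphism). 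The rest of your plan coincides in substance with the paper: your counting argument for nilpotency of $H$ is the paper's Lemma~\ref{lemma1} (Frobenius is optional, since a union of two distinct Sylow $p$-subgroups already has more than $|H|_p$ elements), and your ``if'' direction is the paper's coordinatewise product of the $\psi_i$, with your coprime-splitting identity $\langle(a_1,b_1),(a_2,b_2)\rangle=\langle a_1,a_2\rangle\times\langle b_1,b_2\rangle$ making precise the adjacency claim the paper states loosely. Net comparison: the paper's argument aims to be self-contained but is unjustified at its central step, while yours leans on Corollary~\ref{cor} (which the paper quotes and uses elsewhere anyway) and goes through as stated.
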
 
This result serves as the foundation for all subsequent findings, demonstrating that certain groups are uniquely determined by their enhanced power graphs.
 We further extend this analysis to the power graph and the cyclic graph.

In Section 2.2, we examine the nilpotent groups that share the same enhanced power graph. We characterize specific groups for which the enhanced power graphs are uniquely determined, which means that if for any group $H$, $P_e(G)\cong P_e(H)$  then $G\cong H$. Furthermore, we extend this uniqueness result to power graphs and cyclic graphs.
 Specifically, we establish the uniqueness of the enhanced power graph, power graph, and cyclic graph for the nilpotent groups $Q_8\times \mathbb{Z}_{n}$ ($n$ is odd), $(\prod_{i=1}^m \mathbb{Z}_2)\times \mathbb{Z}_n$ ($n$ is odd) and $\mathbb{Z}_p\times \mathbb{Z}_p\times \mathbb{Z}_n$ ($gcd(n,p)=1$). 
 Additionally, we analyze the structure of a group $G$ satisfying
  $P_e(G)\cong 
 P_e(\mathbb{Z}_p\times \mathbb{Z}_p\times \mathbb{Z}_p\times \mathbb{Z}_n)$ for a prime number $p > 2$
 and an integer $n > 0$ such that $n$ and $p$ are coprime.

The main focus of this paper is the enhanced power graph. However, for each theorem established, analogous results are also derived for both the power graph and the cyclic graph. The computations presented here were carried out using a program written in Julia for graph processing.

\section{Main Results}
\subsection{The structure of finite nilpotent groups with the same enhanced power graph}
In this section, we examine whether a group is uniquely determined by its enhanced power graph. Specifically, we investigate whether the isomorphism $P_e(G)\cong P_e(H)$ necessarily implies that $G\cong H $?

In general, this does not hold. Consequently, we focus our analysis on nilpotent groups. To demonstrate this, we present the following counterexample:\\
Consider the nilpotent groups $\mathbb{Z}_3\times \mathbb{Z}_3\times \mathbb{Z}_3$ and $S$ with the presentation
$S=\langle x,y | x^3=y^3=[x,y]^3=1\rangle$ where $[x,y]$ represents  the commutator $xyx^{-1}y^{-1}$.    Although these groups are not isomorphic, their enhanced power graphs are isomorphic (see Figure 1).
                      
\begin{figure}[h]
\label{figz}
\begin{center}
    \begin{tikzpicture}
        \node[circle, draw, fill=black, inner sep=2pt] (C) at (0,0) {};

        \foreach \i in {1,...,13} {
            \node[circle, draw, fill=black, inner sep=2pt] (A\i) at (360*\i/13:2) {};
            \node[circle, draw, fill=black, inner sep=2pt] (B\i) at (360*\i/13 + 360/26:3) {};

            \draw (C) -- (A\i);
            \draw (A\i) -- (B\i);
            \draw (B\i) -- (C);
        }
    \end{tikzpicture}
\end{center}

\caption{$P_{e}(\mathbb{Z}_3\times \mathbb{Z}_3\times \mathbb{Z}_3)\cong P_{e}(S)$ }
\end{figure}
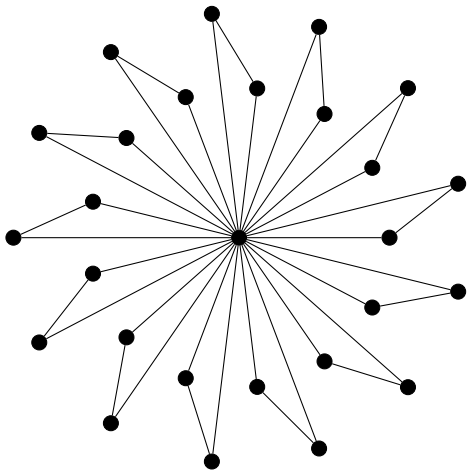

While an isomorphism between the enhanced power graphs of nilpotent groups does not necessarily imply an isomorphism between the groups themselves, the following lemma states the conditions under which such isomorphisms occur. Subsequently, in Theorem \ref{nilpotent}, we characterize the structure of finite nilpotent groups that possess the same enhanced power graph.

\begin{lemma} \label{lemma1}
Let $G$ be a nilpotent group, and $H$ be an arbitrary group. If the enhanced power graphs of $G$ and $H$ are isomorphic, i.e., $P_e(G)\cong P_e(H)$ then $H$ is necessarily nilpotent. 
\end{lemma}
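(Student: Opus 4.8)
The plan is to reduce the nilpotency of $H$ to a counting statement about element orders, leaning on the fact recorded in the introduction that an isomorphism $P_e(G)\cong P_e(H)$ forces $G$ and $H$ to have the same number of elements of each order. First I would note that, since the two graphs have equal vertex sets, $|G|=|H|=:n$; write $n=p_1^{a_1}\cdots p_k^{a_k}$. For a finite group $K$ and a prime $p$, let $\pi_p(K)$ denote the number of elements of $K$ whose order is a power of $p$. The cited spectrum-preservation result then yields $\pi_p(G)=\pi_p(H)$ for every prime $p$, since knowing the number of elements of each order determines, by summation, the number of elements of each $p$-power order.

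The crux is the following purely group-theoretic lemma: for a finite group $K$ and a prime $p$ whose Sylow $p$-subgroup has order $p^a$, one has $\pi_p(K)=p^a$ if and only if the Sylow $p$-subgroup of $K$ is unique (equivalently, normal). The easy direction is that if the Sylow $p$-subgroup $P$ is normal, then every element of $p$-power order generates a $p$-subgroup, which lies in some Sylow $p$-subgroup, necessarily $P$; hence all such elements lie in $P$, giving $\pi_p(K)=|P|=p^a$. For the converse I would argue through Sylow's theorems: if the number $n_p$ of Sylow $p$-subgroups exceeds $1$, then two distinct Sylow $p$-subgroups, each of order $p^a$ and meeting in a proper subgroup of each, already contribute strictly more than $p^a$ elements of $p$-power order, so $\pi_p(K)>p^a$.

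With this lemma the conclusion is immediate. Because $G$ is nilpotent, it is the direct product of its Sylow subgroups, so the elements of $p_i$-power order form exactly the $i$-th factor and $\pi_{p_i}(G)=p_i^{a_i}$ for every $i$. By spectrum preservation $\pi_{p_i}(H)=p_i^{a_i}$, whence the lemma forces the Sylow $p_i$-subgroup of $H$ to be unique for each $i$. A finite group all of whose Sylow subgroups are normal decomposes as their direct product and is therefore nilpotent, so $H$ is nilpotent, as required.

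I expect the only nontrivial point to be the converse direction of the counting lemma, namely the Sylow-counting argument that having at most $p^a$ elements of $p$-power order forces a unique Sylow $p$-subgroup. The care needed there is to verify that distinct Sylow $p$-subgroups genuinely overcount, which holds because each has order $p^a$ while their intersection is a proper subgroup of each, so their union already exceeds $p^a$ elements of $p$-power order.
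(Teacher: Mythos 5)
Your proof is correct and follows essentially the same route as the paper: both arguments reduce nilpotency of $H$ to counting elements of $p$-power order, use nilpotency of $G$ to equate that count with the order of the Sylow $p$-subgroup, and conclude that every Sylow subgroup of $H$ is unique, hence $H$ is nilpotent. The only difference is that you make explicit, via the union-of-two-Sylows counting argument, the step the paper leaves implicit --- that having exactly $p^a$ elements of $p$-power order forces the Sylow $p$-subgroup to be unique --- which is a fair piece of added rigor rather than a different approach.
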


\begin{proof}
    Assume that $P_e(G)\cong P_e(H)$. Then $G$ and $H$ have the same elements of each order and $|G|=|H|$. For each prime $p$ that divides the common order of the groups $G$ and $H$, the group $H$ has the same number of elements of order a power of $p$ as $G$. As $G$ is nilpotent, the count of elements whose order is a power of $p$ in $G$ matches the order of its Sylow $p$-subgroup. Consequently, the same holds for $H$, meaning that $H$ possesses a unique Sylow $p$-subgroup for every prime $p$ that divides its order. Therefore, $H$ is nilpotent.
    \end{proof}

\textit{ Proof of Theorem \ref{nilpotent}.}
    Let $G$ be a nilpotent group, $H$ an arbitrary group and suppose that $P_e(G)\cong P_e(H)$. Then  $G$ and $H$ must have the same order, and by Lemma \ref{lemma1}, both are nilpotent groups. Consequently, we have
 \begin{equation*}
     G = G_{P_1}\times G_{P_2}\times \cdots \times G_{P_r} ,  \ \ \  H = H_{Q_1}\times H_{Q_2}\times \cdots\times H_{Q_r}
 \end{equation*}
which $G_{P_i}$ and $H_{Q_i}$ are the unique Sylow $p_i$-subgroup of $G$ and $H$ respectively. Since $P_e(G)\cong P_e(H)$, then there is an isomorphism $\phi$ from $P_e(G)$ to $P_e(H)$.\\

 Let  $x= (x_1, x_2,\ldots,x_r)\in G$ and $y=(y_1, y_2,\ldots, y_r)\in  H$.  For each $1\leq i\leq r$  define $\psi_i :V(P_e(G_{P_i}))\rightarrow V(P_e(H_{Q_i}))$ by $x_i\rightarrow y_i$. We aim to show that $\psi_i$ is a graph isomorphism between $P_e(G_{P_i})$ and $P_e(H_{Q_i})$. 
 
 Since $\phi$ is a graph isomorphism and $\psi_i$ is defined in accordance with it,  it follows that $\psi_i$ must be a bijection. 
 Moreover, if $x_i\sim x’_i$ in $P_e(G_{P_i})$ then $\langle x_i,x’_i\rangle$ is cyclic subgroup of $G_{P_i}$. Consequently, the elements  $(e,…, e, x_i, e,…, e)$ and $(e,…, e, x’_i, e,…, e)$ form a cyclic subgroup of $G$. By the isomorphism $\phi$, we have
  $\phi((e,…, e, x_i, e,…, e)) \sim \phi((e,…, e, x’_i, e,…, e))$ in $P_e(H)$.  Therefore, the 
$i$-th coordinates of these images remain adjacent, which establishes that  $\psi_i$
  preserves adjacency. Hence, $\psi_i$ is a graph isomorphism.\\
  
Conversely, let $G$ and $H$ be nilpotent groups,
each containing a unique Sylow 
 $p_i$-subgroup for every prime $p_i$
  dividing their order.  For each prime $p_i$ let $G_{P_i}$ and $H_{Q_i}$ be the Sylow $p_i$-subgroups of $G$ and $H$, respectively. Suppose that for every  $1\leq i \leq r$, $P_e(G_{P_i}) \cong P_e(H_{Q_i})$.
Since there exists a graph isomorphism $\psi_i$ between  $P_e(G_{P_i})$ and $P_e(H_{Q_i})$, we define $\phi$ from $P_e(G)$ to $P_e(H)$ by  $$\phi((x_1,\ldots,x_i,\ldots,x_r))=( \psi_1(x_1),\ldots, \psi _i(x_i),\ldots, \psi_r(x_r)).$$
As $\phi$ is constructed based on the isomorphisms $\psi_i$,
it follows that $\phi$ is bijective. Now, suppose $(x_1,\ldots,x_r)\sim (x’_1,\ldots,x’_r)$ in $P_e(G)$. This implies that there exists $(z_1,\ldots,z_r)\in G$ such that $\langle(x_1,\ldots,x_r),(x’_1,\ldots,x’_r)\rangle \in \langle (z_1,\ldots,z_r)\rangle$. Consequently, for every $i$, $\langle x_i,x’_i\rangle \in \langle z_i\rangle$ which implies $x_i\sim x’_i$ in $P_e(G_{P_i})$. By graph isomorphism $\psi_i$, it follows that for each every $i$, $\psi_i( x_i)\sim \psi_i (x’_i)$ in $P_e(H_{Q_i})$. Thus, we conclude that $$( \psi_1(x_1),\ldots, \psi_i(x_i),\ldots, \psi_r(x_r)) \sim ( \psi_1(x’_1),\ldots, \psi_i(x’_i),\ldots, \psi_r(x’_r))$$ which implies $$\phi((x_1,\ldots,x_i,\ldots,x_r)) \sim \phi ((x’_1,\ldots,x’_i,\ldots,x’_r))$$
This establishes that $\phi$ is a graph isomorphism between $P_e(G)$ and $P_e(H)$, completing the proof.\qed


\vspace{0.5cm}
Building on our investigation of nilpotent groups with isomorphic enhanced power graphs,  we now focus on nilpotent groups with isomorphic power graphs.
To begin, we present a Corollary from \cite{zahirovic2020study}, which helps extend our results from enhanced power graphs to power graphs.

\begin{corollary}\label{cor}(Corollary 3.1, \cite{zahirovic2020study})
    For all finite groups $G$ and $H$ the following conditions are equivalent: 
\begin{enumerate}
    \item[i.] Undirected power graphs of $G$ and $H$ are isomorphic;
 \item[ii.]  Directed power graphs of $G$ and $H$ are isomorphic;
 \item[iii.] Enhanced power graphs of $G$ and $H$ are isomorphic.
\end{enumerate}
\end{corollary}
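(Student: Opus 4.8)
\textit{Proof proposal for Corollary \ref{cor}.}

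The plan is to establish the equivalence through the reconstruction chain $(i)\Leftrightarrow(ii)$, $(ii)\Rightarrow(iii)$ and $(iii)\Rightarrow(i)$, so that the three conditions become mutually equivalent. The implication $(ii)\Rightarrow(i)$ is immediate, since deleting the orientation of every arc turns an isomorphism of directed power graphs into an isomorphism of the underlying undirected power graphs. The converse $(i)\Rightarrow(ii)$ is Cameron's reconstruction theorem for finite groups, which I would invoke rather than reprove: its mechanism is to orient an edge $\{x,y\}$ of $Pow(G)$ by comparing closed neighbourhoods, a strict inclusion $N[y]\subsetneq N[x]$ forcing $y\in\langle x\rangle$, while the residual ambiguity among mutually twin generators of a common cyclic subgroup is settled by an order/counting argument. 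Granting $(i)\Leftrightarrow(ii)$, the step $(ii)\Rightarrow(iii)$ is purely formal: two distinct elements $x,y$ are adjacent in $P_e(G)$ exactly when $\langle x,y\rangle$ is cyclic, i.e. when some $z$ satisfies $x,y\in\langle z\rangle$; in the directed power graph this says that there is a vertex $z$ from which both $x$ and $y$ are reachable by an arc or equal to it. As this relation is preserved by every digraph isomorphism, $P_e(G)\cong P_e(H)$ follows.

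The entire content therefore lies in $(iii)\Rightarrow(i)$: from $P_e(G)\cong P_e(H)$ I must produce an isomorphism $Pow(G)\cong Pow(H)$. My plan is to recover the power graph up to isomorphism from the cyclic-subgroup data encoded in $P_e$. First I would use the known structural fact that the maximal cliques of the enhanced power graph of a finite group are exactly its maximal cyclic subgroups; hence a graph isomorphism $P_e(G)\cong P_e(H)$ carries maximal cyclic subgroups of $G$ bijectively onto those of $H$. Next I would note that $x\in\langle y\rangle$ implies the inclusion $N_{P_e}[y]\subseteq N_{P_e}[x]$ (any $z$ lying with $y$ in a cyclic group lies with $x$ in that same group), so that comparability of the generated cyclic subgroups is reflected, in one direction, by comparability of closed neighbourhoods. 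Since power-graph adjacency of distinct $x,y$ is precisely the comparability $\langle x\rangle\subseteq\langle y\rangle$ or $\langle y\rangle\subseteq\langle x\rangle$, the goal reduces to reading off this comparability, together with the orders of elements, from the way the maximal cliques of $P_e$ overlap.

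The main obstacle is that this last reading cannot be done pointwise, because inside a single cyclic subgroup the enhanced power graph is complete: for instance $P_e(\mathbb{Z}_6)\cong K_6$ admits every permutation as an automorphism, yet $Pow(\mathbb{Z}_6)$ is not complete, so generators of distinct subgroups are indistinguishable by their $P_e$-neighbourhoods alone. The resolution I would pursue is global rather than local: the intersections of the maximal cliques recover the whole lattice of cyclic subgroups of $G$ with its containments, and counting within each clique via Euler's function $\varphi$ recovers, for every $d$, the number of cyclic subgroups of order $d$, hence the multiset of element orders. One then shows that two finite groups with isomorphic enhanced power graphs share this cyclic-subgroup lattice and order distribution, and that these data determine the power graph up to isomorphism, so that an explicit isomorphism can be assembled clique by clique. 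Verifying that the lattice-plus-counting data is genuinely invariant under the $P_e$-isomorphism, and that it suffices to reconstruct $Pow$, is where the real work sits; this is essentially the step carried out in \cite{zahirovic2020study}, and it is the part I expect to be the hardest.
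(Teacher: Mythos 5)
The paper offers no proof of this statement at all: it is imported verbatim as Corollary~3.1 of \cite{zahirovic2020study}, so the only ``proof'' in the paper is the citation itself. Measured against that, your proposal does more, and its architecture is the correct one from the literature: $(ii)\Rightarrow(i)$ by forgetting orientation, $(i)\Rightarrow(ii)$ by Cameron's reconstruction theorem, and $(ii)\Rightarrow(iii)$ by noting that $P_e$-adjacency is definable inside the directed power graph (two distinct vertices are adjacent in $P_e$ iff some vertex $z$ has an arc to each of them or equals one of them); these two easy implications you actually prove, and they are correct.

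However, read as a proof, your proposal has a genuine gap, and it is the one you yourself flag: the direction $(iii)\Rightarrow(i)$ is never carried out but deferred to \cite{zahirovic2020study} --- which is circular here, since the corollary being proved \emph{is} a result of that very paper. Worse, the plan you sketch for it fails in a concrete way. Your claim that the maximal cliques of $P_e(G)$ are exactly the maximal cyclic subgroups is fine (and your monotonicity observation $x\in\langle y\rangle \Rightarrow N_{P_e}[y]\subseteq N_{P_e}[x]$ is fine), but intersections of maximal cliques do \emph{not} recover ``the whole lattice of cyclic subgroups'': in $\mathbb{Z}_{p^k}$ (or $\mathbb{Z}_6$) the unique maximal clique is the whole group, so no proper cyclic subgroup arises as an intersection of maximal cliques. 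Consequently the lattice you propose to extract is not available from the graph in the way described, and the follow-up assertion that ``lattice plus $\varphi$-counting determines $Pow(G)$ up to isomorphism'' is precisely the hard statement that still needs an argument; the actual proof of Zahirović, Bošnjak and Madarász proceeds instead by a closed-twin analysis within cliques combined with a Cameron-style counting/matching argument. Either supply that reconstruction in full, or do what the paper does and present the statement as a cited result rather than as something proved.
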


Additionally, we examine the cyclic graph of a group $G$, which consists of the vertex set $G\setminus Cyc(G)$ and two vertices $x$, $y$ are adjacent if $\langle x,y\rangle$ is cyclic.
In fact, the enhanced power graph satisfies the relation $P_e(G)=C(G)\vee Cyc(G) $. We also aim to extend our results on nilpotent groups with isomorphic enhanced power graphs to nilpotent groups with isomorphic cyclic graphs. Therefore, we explore the relationship between the cyclic graph and the enhanced power graph.
\begin{lemma}\label{lem2}
    For a pair of finite groups $G$ and $H$, $P_e(G)\cong P_e(H)$ if and only if $C(G)\cong C(H)$ and $|G|=|H|$. 
\end{lemma}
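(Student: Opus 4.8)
The plan is to exploit the decomposition $P_e(G)=C(G)\vee Cyc(G)$ already recorded above, which says that $P_e(G)$ is the join of the cyclic graph $C(G)$ with a complete graph on the set $Cyc(G)$ of dominating vertices of $P_e(G)$. Writing $c_G=|Cyc(G)|$, this reads $P_e(G)\cong C(G)\vee K_{c_G}$, and similarly $P_e(H)\cong C(H)\vee K_{c_H}$. Both directions then reduce to two observations: that a graph isomorphism carries dominating vertices to dominating vertices, and that the join operation respects isomorphism of its parts.

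For the forward implication I would suppose $\phi:P_e(G)\to P_e(H)$ is an isomorphism. Counting vertices gives $|G|=|V(P_e(G))|=|V(P_e(H))|=|H|$ at once. Since being a dominating vertex (adjacent to every other vertex) is an isomorphism invariant, $\phi$ restricts to a bijection $Cyc(G)\to Cyc(H)$, and hence also to a bijection $G\setminus Cyc(G)\to H\setminus Cyc(H)$. Because $C(G)$ and $C(H)$ are by definition the subgraphs of $P_e(G)$ and $P_e(H)$ induced on these complements, the restriction of $\phi$ to $G\setminus Cyc(G)$ is an isomorphism $C(G)\to C(H)$, giving $C(G)\cong C(H)$.

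For the converse I would assume $C(G)\cong C(H)$ and $|G|=|H|$. Isomorphic graphs have equally many vertices, so $|V(C(G))|=|V(C(H))|$; subtracting from $|G|=|H|$ yields $c_G=|G|-|V(C(G))|=|H|-|V(C(H))|=c_H$. Hence $K_{c_G}\cong K_{c_H}$, and combining an isomorphism $C(G)\to C(H)$ with any bijection $K_{c_G}\to K_{c_H}$ (available since $c_G=c_H$) produces an isomorphism of joins $C(G)\vee K_{c_G}\to C(H)\vee K_{c_H}$, which via the decomposition is exactly $P_e(G)\cong P_e(H)$.

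The point requiring care, and the reason the hypothesis $|G|=|H|$ cannot be dropped in the converse, is the counting step $c_G=c_H$: an isomorphism of cyclic graphs alone controls only the non-dominating part, so without matching total orders the two groups could have different numbers of dominating vertices and thus non-isomorphic enhanced power graphs. Everything else is a routine verification that domination is preserved under isomorphism and that $\vee$ behaves well under isomorphism of its factors.
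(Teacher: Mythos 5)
Your proposal is correct and follows essentially the same route as the paper: both use the decomposition $P_e(G)=C(G)\vee Cyc(G)$, the fact that an isomorphism carries dominating vertices ($Cyc(G)$) onto dominating vertices ($Cyc(H)$), and the counting argument $|Cyc(G)|=|Cyc(H)|$ to pass between the two statements. If anything, your write-up is slightly more careful than the paper's, which asserts the cancellation ``$P_e(G)\cong P_e(H)$ and $|Cyc(G)|=|Cyc(H)|$ implies $C(G)\cong C(H)$'' without spelling out, as you do, that the isomorphism restricts to the induced subgraphs on the non-dominating vertices.
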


\begin{proof}
Suppose  $P_e(G)$ and $P_e(H)$ are isomorphic. Since the vertex sets of $P_e(G)$ and $P_e(H)$ are the elements groups $G$ and $H$, respectively, it follows that \(|G| = |H|\).  
Under the isomorphism, the  dominating set of $P_e(G)$, corresponding to \( \mathrm{Cyc}(G) \), must be mapped to the dominating set of $P_e(H)$, corresponding to \( \mathrm{Cyc}(H) \). Thus, we have \(|\mathrm{Cyc}(G)| = |\mathrm{Cyc}(H)|\).  
In other words, we also have
$${P_e(G)} = C(G) \vee \mathrm{Cyc}(G), \ \ \ P_e(H) = \ C(H) \vee \mathrm{Cyc}(H)$$.

Since $ P_e(G) \cong P_e(H)$ and $|\mathrm{Cyc}(G)|= |\mathrm{Cyc}(H)|$, it follows that  
$C(G) \cong C(H)$.

Conversely, suppose \( C(G) \cong C(H) \) and \(|G| = |H|\). By definition of the cyclic graphs, the vertex sets   \( G \setminus \mathrm{Cyc}(G) \) and \( H \setminus \mathrm{Cyc}(H) \) must have the same size. This implies that $|Cyc(G)|=|Cyc(H)|$ and
 \( P_e(G) \cong P_e(H) \).
\end{proof}

As shown in the proof of the Lemma \ref{lem2}, the condition \( |G| = |H| \) can be replaced with \( |\mathrm{Cyc}(G)| = |\mathrm{Cyc}(H)| \). This leads to the following corollary.

\begin{corollary}\label{condition} 
$P_e(G)\cong P_e(H)$ if and only if $C(G)\cong C(H)$ and $|\mathrm{Cyc}(G)| = |\mathrm{Cyc}(H)|$. 
\end{corollary}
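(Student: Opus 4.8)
The plan is to recognize this corollary as a direct refinement of Lemma \ref{lem2}, in which the hypothesis $|G| = |H|$ is traded for the more intrinsic condition $|\mathrm{Cyc}(G)| = |\mathrm{Cyc}(H)|$. The key observation I would exploit is that whenever $C(G) \cong C(H)$, the two size conditions become equivalent, so no genuinely new argument is needed beyond verifying this equivalence and then appealing to the lemma already in hand.

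First I would record the decomposition $P_e(G) = C(G) \vee \mathrm{Cyc}(G)$ and $P_e(H) = C(H) \vee \mathrm{Cyc}(H)$, where $\mathrm{Cyc}(G)$ and $\mathrm{Cyc}(H)$ are the sets of dominating vertices, each inducing a complete graph joined to everything else. For the forward direction, assuming $P_e(G) \cong P_e(H)$, I would argue exactly as in the proof of Lemma \ref{lem2}: any isomorphism of enhanced power graphs must carry dominating vertices to dominating vertices, which immediately gives $|\mathrm{Cyc}(G)| = |\mathrm{Cyc}(H)|$ and restricts to an isomorphism $C(G) \cong C(H)$ on the remaining non-dominating vertices.

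For the converse, I would start from $C(G) \cong C(H)$ together with $|\mathrm{Cyc}(G)| = |\mathrm{Cyc}(H)|$. Since $C(G) \cong C(H)$ already forces $|V(C(G))| = |V(C(H))|$, and $|G| = |V(C(G))| + |\mathrm{Cyc}(G)|$ (likewise for $H$), the hypothesis $|\mathrm{Cyc}(G)| = |\mathrm{Cyc}(H)|$ yields $|G| = |H|$. At that point the hypotheses of Lemma \ref{lem2} are satisfied, and I would simply invoke that lemma to conclude $P_e(G) \cong P_e(H)$. Alternatively, the isomorphism can be built directly by combining an isomorphism $C(G) \to C(H)$ with any bijection $\mathrm{Cyc}(G) \to \mathrm{Cyc}(H)$, using that the join of isomorphic graphs with equal-size cliques is itself an isomorphism.

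I do not anticipate any real obstacle: the content is entirely bookkeeping on the join decomposition. The only point requiring care is confirming that the dominating vertices of $P_e(G)$ are precisely $\mathrm{Cyc}(G)$ and that they induce a complete graph joined to $C(G)$, but this is already established in the terminology section, so the equivalence of the two size conditions under $C(G) \cong C(H)$ is essentially the whole argument.
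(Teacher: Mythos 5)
Your proposal is correct and follows essentially the same route as the paper: the forward direction repeats the argument of Lemma~\ref{lem2} (dominating vertices map to dominating vertices, giving $|\mathrm{Cyc}(G)|=|\mathrm{Cyc}(H)|$ and, by restriction, $C(G)\cong C(H)$), and the converse recovers $|G|=|H|$ from the two hypotheses and then invokes Lemma~\ref{lem2}, which is precisely the paper's observation that the condition $|G|=|H|$ may be replaced by $|\mathrm{Cyc}(G)|=|\mathrm{Cyc}(H)|$.
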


\begin{theorem}\label{p}
     Let $G$ be a nilpotent group and $H$  an arbitrary group. Then $Pow(G)\cong Pow(H)$ if and only if $H$ is also nilpotent and $Pow(G_P)\cong Pow(H_Q)$  where $G_{P}$ and $H_{Q}$ represent the Sylow $p$-subgroups of $G$ and $H$, for every prime $p$, respectively.
\end{theorem}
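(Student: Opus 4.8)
The plan is to reduce Theorem \ref{p} entirely to Theorem \ref{nilpotent} by invoking the equivalence established in Corollary \ref{cor}. The key observation is that Corollary \ref{cor} tells us, for \emph{any} two finite groups, that $Pow(G)\cong Pow(H)$ holds if and only if $P_e(G)\cong P_e(H)$. This equivalence applies simultaneously at the level of the full groups and at the level of each pair of Sylow subgroups, since Sylow subgroups are themselves finite groups.

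First I would translate the hypothesis. Assume $G$ is nilpotent and $Pow(G)\cong Pow(H)$. By Corollary \ref{cor} applied to the pair $(G,H)$, this is equivalent to $P_e(G)\cong P_e(H)$. Now I can feed this into Theorem \ref{nilpotent}: since $G$ is nilpotent, $P_e(G)\cong P_e(H)$ forces $H$ to be nilpotent as well, and moreover yields $P_e(G_P)\cong P_e(H_Q)$ for every prime $p$, where $G_P$ and $H_Q$ are the respective Sylow $p$-subgroups. Applying Corollary \ref{cor} a second time, now to each pair $(G_P,H_Q)$, converts each enhanced-power-graph isomorphism back into a power-graph isomorphism $Pow(G_P)\cong Pow(H_Q)$. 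This establishes the forward direction, with the conclusion that $H$ is nilpotent and that the Sylow power graphs match up prime by prime.

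For the converse, I would run the same chain in reverse. Suppose $H$ is nilpotent and $Pow(G_P)\cong Pow(H_Q)$ for every prime $p$. By Corollary \ref{cor} applied to each Sylow pair, we obtain $P_e(G_P)\cong P_e(H_Q)$ for every $p$. Since both $G$ and $H$ are nilpotent, the converse direction of Theorem \ref{nilpotent} then gives $P_e(G)\cong P_e(H)$. One final application of Corollary \ref{cor} to $(G,H)$ upgrades this to $Pow(G)\cong Pow(H)$, completing the proof.

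I do not anticipate a genuine obstacle here, since the argument is a purely formal relay between two already-established results. The only point requiring a little care is verifying that the hypotheses of Theorem \ref{nilpotent} are met at each invocation: in the forward direction $G$ is nilpotent by assumption and $H$'s nilpotency is a \emph{conclusion} of the theorem, whereas in the converse direction both groups must already be known to be nilpotent so that the product-of-Sylow decompositions used in Theorem \ref{nilpotent} are available. Provided one states the converse with $H$ nilpotent as a standing hypothesis (as the theorem statement does), the relay goes through cleanly.
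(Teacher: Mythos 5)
Your proof is correct and follows essentially the same route as the paper's own argument: both relay through Corollary \ref{cor} to convert the power-graph isomorphism into an enhanced-power-graph isomorphism, invoke Theorem \ref{nilpotent} to get nilpotency of $H$ and the Sylow-level isomorphisms, and then apply Corollary \ref{cor} again to each Sylow pair. The paper dispatches the converse with ``can be proved by a similar way,'' so your explicit reverse chain is, if anything, slightly more complete than the published proof.
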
 
\begin{proof}
   Let $G$ be a nilpotent group.  If  $Pow(G)\cong Pow(H)$ then by the Corollary \ref{cor}, we get $P_e(G)\cong P_e(H)$. Additionally, Theorem \ref{nilpotent} asserts that $H$ is nilpotent as well, and for each prime $p$, the condition $P_e(G_{P}) \cong P_e(H_{Q})$ holds, where $G_{P}$ and $H_{Q}$ denote the Sylow $p$-subgroups of $G$ and $H$, respectively. Now, applying the Corollary \ref{cor}, to each   Sylow $p$-subgroups of $G$ and $H$, we conclude that  $Pow(G_{P}) \cong Pow(H_{Q})$. The converse can be proved by a similar way.
\end{proof}

We previously proved the following corollary in \cite{mirzargar2022finite}. Now, using Corollary \ref{cor} and Lemma \ref{lemma1} together, we derive the same conclusion. 
\begin{corollary}
   Let $G$ be a nilpotent group. If $H$ is any group for which $Pow(G)\cong Pow(H)$, then $H$ must also be nilpotent.
\end{corollary}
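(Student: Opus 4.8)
The plan is to combine the two results that immediately precede this corollary in the excerpt, namely Corollary \ref{cor} (the equivalence of power-graph and enhanced-power-graph isomorphism) and Lemma \ref{lemma1} (a nilpotent group's enhanced-power-graph partner is forced to be nilpotent). The statement to prove is that if $G$ is nilpotent and $H$ is any group with $Pow(G)\cong Pow(H)$, then $H$ is nilpotent. This is a strict weakening of Theorem \ref{p}, so in principle it follows from that theorem, but the intended short argument routes directly through the two more elementary ingredients.

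First I would assume $Pow(G)\cong Pow(H)$. By Corollary \ref{cor}, the isomorphism of undirected power graphs is equivalent to the isomorphism of enhanced power graphs, so I immediately deduce $P_e(G)\cong P_e(H)$. Next I would invoke Lemma \ref{lemma1}: since $G$ is nilpotent and $P_e(G)\cong P_e(H)$, the lemma forces $H$ to be nilpotent. That completes the argument; there is essentially nothing further to verify, because both cited results are quoted as black boxes and their hypotheses ($G$ nilpotent, a power-graph isomorphism) are exactly what is given.

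There is no genuine obstacle here, since the work has all been done in establishing Corollary \ref{cor} and Lemma \ref{lemma1}; the only thing to be careful about is the logical direction. Corollary \ref{cor} is stated as a biconditional equivalence among the three graph types, so I can use it to pass from the power-graph hypothesis to the enhanced-power-graph hypothesis without any loss, and Lemma \ref{lemma1} is stated exactly for the configuration "$G$ nilpotent, $P_e(G)\cong P_e(H)$." One could alternatively cite Theorem \ref{p} directly and observe that its conclusion (that $H$ is nilpotent) already contains the claim, but the two-step route is cleaner and matches the remark in the text that this corollary was previously proved by other means in \cite{mirzargar2022finite} and is now being re-derived from the present machinery.

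In short, the proof I would write is: apply Corollary \ref{cor} to replace $Pow$ by $P_e$, then apply Lemma \ref{lemma1} to conclude nilpotency of $H$. This is a two-line deduction, and the purpose of stating it as a separate corollary is expository — it isolates the nilpotency-transfer consequence that will be used later when characterizing specific nilpotent groups up to power-graph isomorphism.
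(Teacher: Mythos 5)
Your proposal is correct and takes exactly the paper's intended route: the paper itself states that this corollary is derived by "using Corollary \ref{cor} and Lemma \ref{lemma1} together," which is precisely your two-step argument of passing from $Pow(G)\cong Pow(H)$ to $P_e(G)\cong P_e(H)$ and then invoking the nilpotency-transfer lemma.
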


In \cite{abdollahi2007noncyclic}, Abdollahi established the following theorem for non-cyclic graph of nilpotent groups:
\begin{theorem}\label{abd}(\cite{abdollahi2007noncyclic})
    Let $G$ be a finite, non-cyclic nilpotent group. If $H$ is a group such that the number of cyclic elements in both $G$ and $H$ is equal to 1, and the non-cyclic graphs of $G$ and $H$ are isomorphic, then $H$ must also be a finite nilpotent group.
    Moreover, if $G_{P}$ and $H_{Q}$ are the Sylow $p$-subgroups of $G$ and $H$, for every prime $p$, respectively, the non-cyclic graph of $G_{P}$ is isomorphic to the non-cyclic graph of $H_{Q}$.
\end{theorem}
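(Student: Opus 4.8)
The plan is to reduce this statement about non-cyclic graphs to the results already established for enhanced power graphs, exploiting the fact that the non-cyclic graph of a group is exactly the complement of its cyclic graph $C(G)$. First I would verify this complementarity. A vertex $x$ is isolated in the non-cyclic graph precisely when $\langle x, y\rangle$ is cyclic for every $y \in G$, that is, exactly when $x \in \mathrm{Cyc}(G)$; hence the non-cyclic graph has vertex set $G \setminus \mathrm{Cyc}(G)$, the same as $C(G)$. For any two distinct vertices $x, y \in G \setminus \mathrm{Cyc}(G)$, exactly one of ``$\langle x, y\rangle$ cyclic'' or ``$\langle x,y\rangle$ non-cyclic'' holds, so the edge set of the non-cyclic graph is the complement of the edge set of $C(G)$ on this common vertex set. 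Writing $N(G)$ for the non-cyclic graph, this gives $N(G) = \overline{C(G)}$, and since complementation is an involution that commutes with graph isomorphism, $N(G) \cong N(H)$ if and only if $C(G) \cong C(H)$.

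With this translation in hand, I would argue as follows. The hypotheses supply $N(G) \cong N(H)$ together with $|\mathrm{Cyc}(G)| = |\mathrm{Cyc}(H)| = 1$. By the complementarity just established, $N(G) \cong N(H)$ yields $C(G) \cong C(H)$. Now Corollary \ref{condition} applies verbatim: since $C(G) \cong C(H)$ and $|\mathrm{Cyc}(G)| = |\mathrm{Cyc}(H)|$, we obtain $P_e(G) \cong P_e(H)$. Because $G$ is nilpotent, Lemma \ref{lemma1} then forces $H$ to be nilpotent, which is the first assertion.

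For the claim about Sylow subgroups, I would feed the isomorphism $P_e(G) \cong P_e(H)$ into Theorem \ref{nilpotent}. This produces, for every prime $p$, an isomorphism $P_e(G_P) \cong P_e(H_Q)$ between the enhanced power graphs of the corresponding Sylow $p$-subgroups. Applying the forward direction of Lemma \ref{lem2} to the pair $(G_P, H_Q)$ converts this back into $C(G_P) \cong C(H_Q)$. Finally, invoking the complementarity once more, namely $N(G_P) = \overline{C(G_P)}$ and $N(H_Q) = \overline{C(H_Q)}$, I conclude that the non-cyclic graph of $G_P$ is isomorphic to that of $H_Q$, completing the proof.

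The one step requiring genuine care, and the place where I expect the main obstacle, is the complementarity claim, specifically pinning down that the deleted (isolated) vertices of the non-cyclic graph coincide exactly with $\mathrm{Cyc}(G)$, so that $N(G)$ and $C(G)$ share the identical vertex set $G \setminus \mathrm{Cyc}(G)$. If the two graphs were inadvertently built on different vertex sets, the complement identity would fail and the clean transfer through Corollary \ref{condition} and Lemma \ref{lem2} would break. Everything else is a bookkeeping chain through the already-established equivalences, and in particular the hypothesis $|\mathrm{Cyc}(G)| = |\mathrm{Cyc}(H)| = 1$ is used only to match the cardinality condition appearing in Corollary \ref{condition}.
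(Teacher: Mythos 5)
Your proof is correct, but it is worth being precise about what it is being compared to: the paper itself gives no proof of Theorem~\ref{abd} at all --- it is imported verbatim from \cite{abdollahi2007noncyclic}, where the original argument is carried out directly on non-cyclic graphs by group-theoretic means, with no reference to enhanced power graphs. What you have done instead is derive the cited result from the paper's own machinery: complementation identifies the non-cyclic graph with $\overline{C(G)}$ on the common vertex set $G\setminus\mathrm{Cyc}(G)$, the hypothesis $|\mathrm{Cyc}(G)|=|\mathrm{Cyc}(H)|=1$ feeds the cardinality condition of Corollary~\ref{condition} to produce $P_e(G)\cong P_e(H)$, and then Lemma~\ref{lemma1}, Theorem~\ref{nilpotent}, and the forward direction of Lemma~\ref{lem2} deliver nilpotency of $H$ and the Sylow-level isomorphisms. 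This is non-circular, since none of those results depend on Theorem~\ref{abd}, and it is in fact exactly the chain the paper runs when it proves its strengthened version, Theorem~\ref{thm2} (where $|\mathrm{Cyc}(G)|=|\mathrm{Cyc}(H)|=1$ is relaxed to $|G|=|H|$ and the statement becomes an equivalence). So your route buys a unification --- Abdollahi's theorem becomes a corollary of the enhanced-power-graph framework rather than an independent input --- whereas the cited original proof is self-contained within non-cyclic graph theory and does not need the Sylow decomposition result Theorem~\ref{nilpotent}.

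Two small points of rigor. First, the theorem asserts that $H$ is \emph{finite} nilpotent, and Lemma~\ref{lemma1}, Lemma~\ref{lem2}, and Corollary~\ref{condition} are all stated for finite groups; you should note explicitly that $|H|=|V(N(H))|+|\mathrm{Cyc}(H)|=|V(N(G))|+1=|G|<\infty$ before invoking them (under the paper's blanket convention that all groups are finite this is cosmetic, but against the literal statement it is needed). Second, the hypothesis that $G$ is non-cyclic plays no role in your argument, and indeed it need not: a nontrivial cyclic group has $\mathrm{Cyc}(G)=G$, so the assumption $|\mathrm{Cyc}(G)|=1$ already rules that case out; observing this would make clear that nothing has been silently dropped.
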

Using Lemma \ref{lem2} and Theorem \ref{nilpotent}, we eliminate some the additional conditions from the Theorem \ref{abd},  and establish the theorem  in both directions as follows:
\begin{theorem}\label{thm2}
      Let $G$ be a nilpotent group, $H$  an arbitrary group and $|G|=|H|$. Then $C(G) \cong C(H)$ if and only if $H$ is also nilpotent and $C(G_{P}) \cong C(H_{Q})$, for every prime $p$,  where $G_{P}$ and $H_{Q}$ represent the Sylow $p$-subgroups of $G$ and $H$, respectively.
\end{theorem}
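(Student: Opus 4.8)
The plan is to reduce Theorem \ref{thm2} to the already-established Theorem \ref{nilpotent} via the cyclic-graph/enhanced-power-graph dictionary recorded in Lemma \ref{lem2}. The key observation is that all three of the statements appearing here---$C(G)\cong C(H)$, isomorphism of enhanced power graphs, and the corresponding statements for the Sylow subgroups---can be translated into enhanced-power-graph language, where the desired equivalence is already known. So rather than re-proving a Sylow-decomposition argument, I would chain together Lemma \ref{lem2} and Theorem \ref{nilpotent}.

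Concretely, I would proceed as follows. First, assume $G$ is nilpotent, $H$ is arbitrary, $|G|=|H|$, and $C(G)\cong C(H)$. By Lemma \ref{lem2}, the hypotheses $C(G)\cong C(H)$ together with $|G|=|H|$ give $P_e(G)\cong P_e(H)$. Now Theorem \ref{nilpotent} applies directly: it yields that $H$ is nilpotent and that $P_e(G_P)\cong P_e(H_Q)$ for every prime $p$, where $G_P$ and $H_Q$ are the Sylow $p$-subgroups. To finish the forward direction I would convert each of these Sylow-level enhanced-power-graph isomorphisms back into a cyclic-graph isomorphism, again via Lemma \ref{lem2}. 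This requires knowing $|G_P|=|H_Q|$; but since $P_e(G_P)\cong P_e(H_Q)$ forces equal vertex sets, we have $|G_P|=|V(P_e(G_P))|=|V(P_e(H_Q))|=|H_Q|$, so Lemma \ref{lem2} gives $C(G_P)\cong C(H_Q)$, as required.

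For the converse, I would run the same chain in reverse. Assume $H$ is nilpotent and $C(G_P)\cong C(H_Q)$ for every prime $p$. As before, $|G_P|=|H_Q|$ (it can be read off from the order hypothesis $|G|=|H|$ combined with nilpotency, since the Sylow subgroup order equals the number of elements of prime-power order for that prime, and isomorphic enhanced power graphs preserve element orders---or more simply from the matching vertex-set sizes of the cyclic graphs plus $|\mathrm{Cyc}|$ data). Lemma \ref{lem2} then upgrades each $C(G_P)\cong C(H_Q)$ to $P_e(G_P)\cong P_e(H_Q)$, the converse half of Theorem \ref{nilpotent} gives $P_e(G)\cong P_e(H)$, and one final application of Lemma \ref{lem2} together with $|G|=|H|$ returns $C(G)\cong C(H)$.

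The only genuinely delicate point---the main obstacle---is bookkeeping the order hypotheses at the Sylow level. Lemma \ref{lem2} is an equivalence between $P_e$-isomorphism and the pair (cyclic-graph isomorphism, equal group orders), so each passage between $C$ and $P_e$ silently needs a matching-order statement for the relevant groups. At the top level this is the given $|G|=|H|$; at the Sylow level I would need to verify $|G_P|=|H_Q|$ for each $p$. I expect this to follow cleanly because nilpotency guarantees $|G_P|$ equals the number of elements of $G$ whose order is a power of $p$, and this count is an invariant of $P_e(G)$ (isomorphic enhanced power graphs yield groups with the same number of elements of each order, as noted in the proof of Lemma \ref{lemma1}); hence $|G|=|H|$ together with the graph isomorphisms forces $|G_P|=|H_Q|$ prime by prime. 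Making this order-tracking fully explicit, so that every invocation of Lemma \ref{lem2} has its hypotheses verified, is where the care is needed, but no new ideas beyond Lemma \ref{lem2} and Theorem \ref{nilpotent} are required.
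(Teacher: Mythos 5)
Your proposal is correct and follows essentially the same route as the paper's own proof: both directions are handled by chaining Lemma \ref{lem2} with Theorem \ref{nilpotent}, with the passage from $P_e(G_P)\cong P_e(H_Q)$ back to $C(G_P)\cong C(H_Q)$ (which the paper phrases as ``removing the dominating vertices'') and the Sylow-level order equality $|G_P|=|H_Q|$ deduced from $|G|=|H|$ exactly as in the paper. Your extra bookkeeping of orders is harmless and, where it matters, coincides with the paper's argument.
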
 
\begin{proof}
    Suppose $G$ be a nilpotent group and $C(G) \cong C(H)$ with $|G|=|H|$. By Lemma \ref{lem2}, we have $P_e(G)\cong P_e(H)$. Then by Theorem \ref{nilpotent}, $H$ is nilpotent and  for  Sylow $p$-subgroups $G_{P}$ of $G$ and $H_{Q}$ of $H$, $P_e(G_{P})\cong P_e(H_{Q})$. By removing the dominating vertices from  $P_e(G_{P})$ and $P_e(H_{Q})$  we conclude that   $C(G_{P}) \cong C(H_{Q})$.

    Conversely, suppose that $G$ and $H$ be nilpotent groups, $|G|=|H|$ and for every prime $p$, the Sylow $p$-subgroups $G_{P}$  of $G$ and $H_{Q}$  of $H$ satisfy $C(G_{P}) \cong C(H_{Q})$. It follows that  $|P|=|Q|$ since $|G|=|H|$ and $G_{P}$ and $H_{Q}$ are unique Sylow $p$-subgroups. Given that $C(G_{P}) \cong C(H_{Q})$ and $|G_P|=|H_Q|$ for every Sylow $p$-subgroup, we conclude that $P_e(G_P)\cong P_e(H_Q)$ for all Sylow $p$-subgroup of $G$ and $H$. Applying Theorem \ref{nilpotent}, we obtain $P_e(G)\cong P_e(H)$. Finally, by removing the dominating vertices from $P_e(G)$ and $P_e(H)$  we conclude that $C(G) \cong C(H)$.
\end{proof}
The following corollary arises directly from the proof of Theorem
 \ref{thm2}.
\begin{corollary}
    Let $G$ be a nilpotent group. If $H$ is any group for which $C(G)\cong C(H)$ and the orders of $G$ and $H$ are equal, then $H$ is also a nilpotent group.
\end{corollary}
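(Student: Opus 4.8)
The plan is to observe that this corollary is nothing more than the forward implication of Theorem \ref{thm2} stripped of the Sylow-subgroup conclusion, so it should follow by chaining two earlier results rather than by any fresh argument. The key insight is that the hypotheses here, namely $C(G) \cong C(H)$ together with $|G| = |H|$, are precisely the two conditions that Lemma \ref{lem2} converts into an isomorphism of enhanced power graphs.

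First I would apply Lemma \ref{lem2} in the direction ``$C(G) \cong C(H)$ and $|G| = |H|$ imply $P_e(G) \cong P_e(H)$''. Since both hypotheses are given, this yields $P_e(G) \cong P_e(H)$ immediately. At this point the cyclic graph has served its purpose and the problem has been lifted entirely to the enhanced power graph setting, where the nilpotency-transfer machinery is already available.

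Next I would invoke Lemma \ref{lemma1}, which states that if $G$ is nilpotent and $P_e(G) \cong P_e(H)$, then $H$ is necessarily nilpotent. Since $G$ is nilpotent by assumption and we have just produced the isomorphism $P_e(G) \cong P_e(H)$, this directly gives that $H$ is nilpotent, completing the argument.

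I do not expect any genuine obstacle: the proof is a two-step reduction with no case analysis or computation. The only point requiring care is bookkeeping the hypotheses correctly, in particular checking that the condition $|G| = |H|$ is exactly what Lemma \ref{lem2} needs to promote a cyclic-graph isomorphism to an enhanced-power-graph isomorphism (without it, the dominating vertices in $Cyc(G)$ and $Cyc(H)$ need not match). Once that matching is in place, the nilpotency of $H$ is inherited through Lemma \ref{lemma1} with nothing further to verify.
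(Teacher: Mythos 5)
Your proposal is correct and is essentially the paper's own argument: the paper states that this corollary ``arises directly from the proof of Theorem \ref{thm2}'', whose forward direction is exactly your chain --- Lemma \ref{lem2} promotes $C(G)\cong C(H)$ together with $|G|=|H|$ to $P_e(G)\cong P_e(H)$, after which nilpotency of $H$ is transferred. Your only (harmless) difference is that you cite Lemma \ref{lemma1} directly for the last step, whereas the paper routes it through Theorem \ref{nilpotent}, which itself rests on that same lemma.
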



\subsection{Nilpotent groups with the same  enhanced power graphs }
In this section, we examine nilpotent groups whose enhanced power graphs uniquely determine their group structure. Specifically, we identify certain groups \( G \) for which \( P_e(G) \cong P_e(H) \) for some group \( H \) implies \( G \cong H \). To achieve this, we utilize Theorem \ref{nilpotent} and analyze the Sylow subgroups of these groups. Additionally, we extend our investigation to power graphs and cyclic graphs, demonstrating analogous uniqueness results in these settings.
We begin by considering the graph $P_e(Q_8\times\mathbb{Z}_n)$,  (see Figure 2).
\begin{theorem}\label{q8zn}
Let $n$ be a positive odd integer, and let $G$ be a group such that $P_e(Q_8\times\mathbb{Z}_n)\cong P_e(G)$ if and only if $G \cong Q_8\times \mathbb{Z}_n$.
    \end{theorem}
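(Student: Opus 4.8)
The plan is to reduce the problem to a Sylow-by-Sylow analysis via Theorem~\ref{nilpotent}, and then to show that at each prime the relevant Sylow subgroup is forced to be the expected one. First I would observe that $Q_8 \times \mathbb{Z}_n$ is nilpotent, since $Q_8$ is a $2$-group and $\mathbb{Z}_n$ is abelian of odd order, so their direct product decomposes as the product of its Sylow subgroups. Writing $n = p_1^{a_1}\cdots p_k^{a_k}$ with each $p_i$ odd, the Sylow $2$-subgroup of $G_0 := Q_8 \times \mathbb{Z}_n$ is exactly $Q_8$, and for each odd prime $p_i$ dividing $n$ the Sylow $p_i$-subgroup is the cyclic group $\mathbb{Z}_{p_i^{a_i}}$.

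Now suppose $P_e(G) \cong P_e(G_0)$. By Lemma~\ref{lemma1}, $H := G$ is nilpotent, and by Theorem~\ref{nilpotent} the enhanced power graph of each Sylow subgroup of $G$ is isomorphic to that of the corresponding Sylow subgroup of $G_0$. For the odd primes $p_i$, the Sylow $p_i$-subgroup of $G_0$ is cyclic, and the enhanced power graph of a group is complete if and only if the group is cyclic (as noted in the excerpt before Proposition~1); hence $P_e(G_{P_i})$ is complete, forcing the Sylow $p_i$-subgroup of $G$ to be cyclic of the same order $p_i^{a_i}$, i.e.\ $\mathbb{Z}_{p_i^{a_i}}$. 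The crux of the argument is therefore the prime $2$: I must show that any $2$-group $P$ with $P_e(P) \cong P_e(Q_8)$ is isomorphic to $Q_8$. This is the main obstacle, and I would handle it by extracting structural invariants of $P_e(Q_8)$ that pin down $Q_8$ among all groups of order $8$. Since isomorphic enhanced power graphs force the two groups to have the same number of elements of each order, it suffices to recall that among the five groups of order $8$, only $Q_8$ has exactly one element of order $2$ (together with six elements of order $4$), so the order statistics alone distinguish $Q_8$; alternatively one reads this off the graph directly, as $Q_8$ is the unique group of order $8$ whose proper enhanced power graph (after deleting dominating vertices) is a disjoint union of three edges, reflecting its three maximal cyclic subgroups $\langle i\rangle, \langle j\rangle, \langle k\rangle$ of order $4$ pairwise intersecting only in the center.

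Assembling these pieces, every Sylow subgroup of $G$ is isomorphic to the corresponding Sylow subgroup of $G_0$: the $2$-part is $Q_8$ and each odd part is $\mathbb{Z}_{p_i^{a_i}}$. Because a finite nilpotent group is the direct product of its Sylow subgroups, it follows that $G \cong Q_8 \times \mathbb{Z}_{p_1^{a_1}} \times \cdots \times \mathbb{Z}_{p_k^{a_k}} \cong Q_8 \times \mathbb{Z}_n$, where the last isomorphism uses the Chinese Remainder Theorem for the pairwise coprime prime powers. The converse direction is immediate, since isomorphic groups have isomorphic enhanced power graphs. The only genuinely delicate step is the base case at the prime $2$; everything else is a routine application of Theorem~\ref{nilpotent} combined with the completeness characterization of enhanced power graphs of cyclic groups.
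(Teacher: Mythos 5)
Your proposal is correct and follows essentially the same route as the paper's proof: nilpotency of $G$ via Lemma~\ref{lemma1}, reduction to Sylow subgroups via Theorem~\ref{nilpotent}, identification of each odd Sylow subgroup through the completeness of its enhanced power graph, and identification of the Sylow $2$-subgroup as $Q_8$ by element-order statistics among the groups of order $8$ (the paper uses the count of six elements of order $4$, which matches your combined count). Your supplementary graph-theoretic check (three disjoint edges after deleting dominating vertices) is extra and not needed for the argument.
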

\begin{proof}
Since $P_e(Q_8\times\mathbb{Z}_n)\cong P_e(G)$ and $Q_8\times \mathbb{Z}_n$ is a nilpotent group when $n$ is odd, from Lemma \ref{lemma1}, follows that $G$ is also nilpotent. 
We aim to find the Sylow subgroups of $G$ and express $G$ as a direct product of its Sylow subgroups. The group $Q_8\times \mathbb{Z}_n$ has a Sylow $2$-subgroup isomorphic $Q_8$ and for each prime $p$ that divides $n$, there exist a Sylow $p$-subgroup of order $p$-power isomorphic to $\mathbb{Z}_{p^k}$.  By Theorem \ref{nilpotent}, there are two cases:\\

\textit{Case 1.} $G$ has a Sylow $2$-subgroup $G_{P_1}$ of order 8 such that $P_e(Q_8) \cong P_e(G_{P_1})$. Thus $G_{P_1}$ is a group of order 8, $G_{P_1}$ and $Q_8$  have the same numbers of elements of each order. Since $Q_8$ is the unique group of order 8 with six elements of order 4, we conclude that $G_{P_1}\cong Q_8$. \\

\textit{Case 2.} $G$ has a Sylow $p_i$-subgroup $G_{P_i}$, for each prime $p_i$ dividing $n $, $2 \leq i\leq r$, such that $P_e(\mathbb{Z}_{p_i^k})\cong P_e(G_{P_i})$. Thus $P_e(G_{P_i})$ is a complete graph of order $G_{P_i^k}$, we conclude $G_{P_i}\cong \mathbb{Z}_{p_i^k}$.

  Therefore, when $n$ is odd,
 $$G \cong Q_8\times\prod_{p \mid n}\mathbb{Z}_{p^k}\cong Q_8\times \mathbb{Z}_{n}.$$
\end{proof}

\begin{figure}[h]
\begin{center}
    \begin{tikzpicture}
        \def\ovalW{1.5}  
        \def\ovalH{1}    
        
        \node[draw, ellipse, minimum width=\ovalW cm, minimum height=\ovalH cm] (C) at (0,0) {\Huge $K_{2n}$};
        
        \node[draw, ellipse, minimum width=\ovalW cm, minimum height=\ovalH cm] (A) at (-2,2) {\Huge $K_{2n}$};
        \node[draw, ellipse, minimum width=\ovalW cm, minimum height=\ovalH cm] (B) at (2,2) {\Huge $K_{2n}$};
        \node[draw, ellipse, minimum width=\ovalW cm, minimum height=\ovalH cm] (D) at (0,-2.5) {\Huge $K_{2n}$};
        
        \draw (C) -- (A);
        \draw (C) -- (B);
        \draw (C) -- (D);
    \end{tikzpicture}
\end{center}
\caption{The enhanced graph of $Q_8\times \mathbb{Z}_{n}$  where $n$ is an odd integer, consist of  three complete graphs $K_{2n}$ with $2n$ dominating vertices.}
\end{figure}
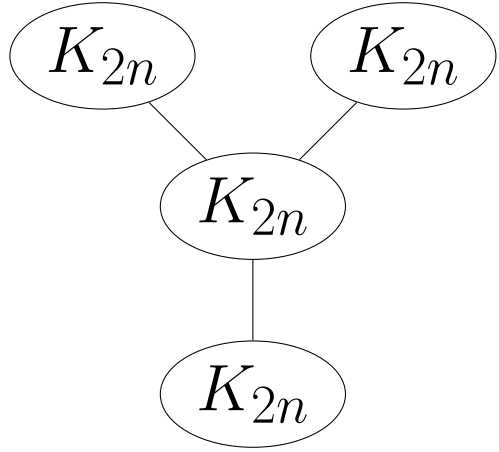

Now by Corollary \ref{cor}, we conclude that the same result holds for the power graphs. This implies that the power graphs of $Q_8\times \mathbb{Z}_{n}$, when $n$ is odd, are unique.

\begin{corollary}
Let $n$ be a positive odd integer.
  $G$ is a group such that $Pow(Q_8\times \mathbb{Z}_n) \cong Pow(G)$ if and only if $G \cong Q_8\times \mathbb{Z}_n$.
    \end{corollary}

Next, we analyze the graph $P_e((\prod_{i=1}^{m} 
    \mathbb{Z}_2)\times\mathbb{Z}_n)$, as illustrated in Figure 3.

\begin{theorem}
    Let $G$ be a group and $n$ be a positive odd integer. Then $P_e(G)\cong P_e((\prod_{i=1}^{m} 
    \mathbb{Z}_2)\times\mathbb{Z}_n)$ if and only if $G\cong (\prod_{i=1}^m \mathbb{Z}_2)\times \mathbb{Z}_n$. 
\end{theorem}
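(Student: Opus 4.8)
The plan is to mirror the proof of Theorem \ref{q8zn}, reducing the problem to the Sylow subgroups and then identifying each Sylow factor from its enhanced power graph. The backward direction is immediate, so I focus on the forward one. Since $n$ is odd, $(\prod_{i=1}^m \mathbb{Z}_2)\times \mathbb{Z}_n$ is nilpotent (a $2$-group times an odd-order abelian group), so assuming $P_e(G) \cong P_e((\prod_{i=1}^m\mathbb{Z}_2)\times\mathbb{Z}_n)$, Lemma \ref{lemma1} gives that $G$ is nilpotent, and $|G| = 2^m n$. Writing $G$ as the direct product of its Sylow subgroups, Theorem \ref{nilpotent} yields, for every prime $p$, an isomorphism $P_e(G_P) \cong P_e(H_Q)$ between the corresponding Sylow $p$-subgroups of $G$ and of the target group.

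For each odd prime $p_i \mid n$, the Sylow $p_i$-subgroup of the target is cyclic, so its enhanced power graph is complete. Hence $P_e(G_{P_i})$ is complete, which (as noted in the excerpt) forces $G_{P_i}$ to be cyclic; being a $p_i$-group of the correct order it is $\mathbb{Z}_{p_i^{k}}$. This is the routine case.

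The main step, and the main obstacle, is identifying the Sylow $2$-subgroup $G_{2}$ from the condition $P_e(G_2) \cong P_e(\mathbb{Z}_2^m)$. First I would compute $P_e(\mathbb{Z}_2^m)$ explicitly: every non-identity element has order $2$, and any two distinct non-identity elements $x,y$ generate a Klein four-group $\{e,x,y,xy\}$, which is not cyclic; thus they are non-adjacent, while $e$ is adjacent to all. Therefore $P_e(\mathbb{Z}_2^m)$ is the star $K_{1,2^m-1}$. The crux is then to run this backwards: if $P_e(G_2)$ is a star, no cyclic subgroup of $G_2$ can contain two distinct non-identity elements, for such a subgroup would produce an edge between two leaves. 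Consequently every cyclic subgroup of $G_2$ has order at most $2$, i.e.\ every element of $G_2$ has order dividing $2$. A group all of whose elements have order dividing $2$ is elementary abelian, so $G_2 \cong \mathbb{Z}_2^{m}$ once we match the order $2^m$. (For $m \ge 2$ the identity is the unique dominating vertex of the star, confirming it is the center; the cases $m \le 1$ are trivial.)

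Assembling the factors, $G \cong \mathbb{Z}_2^m \times \prod_{p \mid n}\mathbb{Z}_{p^k} \cong (\prod_{i=1}^m\mathbb{Z}_2)\times\mathbb{Z}_n$, completing the forward direction. The only subtlety worth care is the passage \emph{star graph} $\Rightarrow$ \emph{every element has order $\le 2$}; everything else is bookkeeping on the Sylow decomposition already licensed by Theorem \ref{nilpotent}.
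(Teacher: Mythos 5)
Your proposal is correct and follows essentially the same route as the paper: reduce to Sylow subgroups via Lemma \ref{lemma1} and Theorem \ref{nilpotent}, identify the odd Sylow subgroups as cyclic because their enhanced power graphs are complete, and identify the Sylow $2$-subgroup via the fact that a star enhanced power graph forces every non-identity element to have order $2$, hence an elementary abelian group. The only difference is that you spell out the star-graph characterization that the paper dismisses as ``easy to see,'' which is a reasonable addition but not a different approach.
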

\begin{proof}
    Since $P_e((\prod_{i=1}^m \mathbb{Z}_2)\times \mathbb{Z}_n) \cong P_e(G)$ and $(\prod_{i=1}^m \mathbb{Z}_2)\times \mathbb{Z}_n$ is a nilpotent group when $n$ is odd, it follows that $G$ is also nilpotent with a Sylow $2$-subgroup isomorphic $\prod_{i=1}^m \mathbb{Z}_{2}$ and for every prime $p$ that divides $n$, there exist  a Sylow 
    $p$-subgroup of order $p$-power isomorphic to $\mathbb{Z}_{p^k}$. Similarly, by Theorem \ref{nilpotent}, $G$ has the following Sylow subgroups:
\begin{itemize}
    \item [i.] A Sylow $2$-subgroup $G_{P}$ such that $P_e((\prod_{i=1}^m \mathbb{Z}_{2})) \cong P_e(G_P)$. It is easy to see that the enhanced power graph of a group is a star graph if and only if it is a finite group such that  $o(x) = 2$ for every non-identity element $x$. This implies the group is abelian and isomorphic to $ \mathbb{Z}_2\times \mathbb{Z}_2 \cdots \times \mathbb{Z}_2$. Thus, $G_P\cong \prod_{i=1}^m \mathbb{Z}_2$.

    \item[ii.] For each prime $p_i$ dividing $n $, a Sylow $p_i$-subgroup $G_{P_i}$, such that $P_e(\mathbb{Z}_{p_i^k})\cong P_e(G_{P_i})$. This implies $G_{P_i}\cong \mathbb{Z}_{p_i^k}$. 
    \end{itemize}
Therefore, when $n$ be a positive odd integer, 
$ G\cong (\prod_{i=1}^{m} \mathbb{Z}_2)\times\mathbb{Z}_{p_i^k} \cong (\prod_{i=1}^{m} \mathbb{Z}_2)\times\mathbb{Z}_n  $. 
\end{proof}

Now by Corollary \ref{cor}, we conclude that the same result holds for the power graphs.
\begin{corollary}
  Let $G$ be a group and  $n$ be a positive odd integer. Then $Pow(G)\cong Pow((\prod_{i=1}^{m} \mathbb{Z}_2)\times \mathbb{Z}_n)$ if and only if $G\cong (\prod_{i=1}^m \mathbb{Z}_2)\times \mathbb{Z}_n$.    
  \end{corollary}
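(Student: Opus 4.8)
The plan is to avoid reproving anything from scratch and instead route the statement through the equivalence supplied by Corollary \ref{cor}, which says that for finite groups the power graph and the enhanced power graph carry the same isomorphism information. The substantive work has already been done in the enhanced-power-graph characterization of $(\prod_{i=1}^m \mathbb{Z}_2)\times\mathbb{Z}_n$ (for odd $n$) proved immediately above, so the task here is purely to transfer that result from $P_e$ to $Pow$.

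First I would dispose of the reverse implication, which is immediate: if $G\cong (\prod_{i=1}^m \mathbb{Z}_2)\times\mathbb{Z}_n$, then isomorphic groups have isomorphic power graphs, whence $Pow(G)\cong Pow((\prod_{i=1}^m \mathbb{Z}_2)\times\mathbb{Z}_n)$. No odd-order hypothesis or Sylow analysis is needed for this direction.

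For the forward implication, suppose $Pow(G)\cong Pow((\prod_{i=1}^m \mathbb{Z}_2)\times\mathbb{Z}_n)$. Applying Corollary \ref{cor} to the two finite groups $G$ and $(\prod_{i=1}^m \mathbb{Z}_2)\times\mathbb{Z}_n$, the isomorphism of power graphs is equivalent to the isomorphism of enhanced power graphs, so $P_e(G)\cong P_e((\prod_{i=1}^m \mathbb{Z}_2)\times\mathbb{Z}_n)$. The enhanced-power-graph theorem established just above then forces $G\cong (\prod_{i=1}^m \mathbb{Z}_2)\times\mathbb{Z}_n$, which completes the argument.

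I do not anticipate any genuine obstacle, since all the real difficulty is packaged inside Corollary \ref{cor} and the preceding enhanced-power-graph theorem. The only points deserving a one-line check are that Corollary \ref{cor} applies because every group in this paper is finite, and that the equivalence it provides is two-sided, so the reduction works without importing any auxiliary hypothesis (such as a matching-order condition). Thus the corollary follows at once, and the same template will serve for all the analogous power-graph corollaries in this section.
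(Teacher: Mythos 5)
Your proposal is correct and is exactly the paper's own argument: the paper likewise deduces this corollary by invoking Corollary \ref{cor} to convert the power-graph isomorphism into an enhanced-power-graph isomorphism and then applying the preceding theorem, with the reverse direction being trivial. No gaps; nothing further is needed.
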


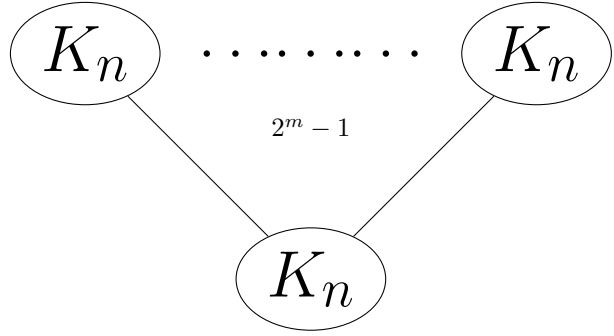
\begin{figure}[h]
\begin{center}
    \begin{tikzpicture}
        \def\ovalW{1.5}  
        \def\ovalH{1}    
        
        \node[draw, ellipse, minimum width=\ovalW cm, minimum height=\ovalH cm] (C) at (0,0) {\Huge $K_{n}$};
        
        \node[draw, ellipse, minimum width=\ovalW cm, minimum height=\ovalH cm] (A) at (3,3) {\Huge $K_{n}$};

        \node (S) at (-1,3) {\Huge $\ldots$};
        \node (S) at (0,3) {\Huge $\ldots$};
        \node (S) at (0,2) { $2^m-1$};
        \node (S) at (1,3) {\Huge $\ldots$};
        \node[draw, ellipse, minimum width=\ovalW cm, minimum height=\ovalH cm] (B) at (-3,3) {\Huge $K_{n}$};

        \draw (C) -- (A);
        \draw (C) -- (B);
        
    \end{tikzpicture}
\end{center}
\caption{The enchanced graph of $ (\prod_{i=1}^m \mathbb{Z}_2)\times \mathbb{Z}_n$, where $n$ is an odd integer, consist of  $2^m-1$ complete graphs $K_{n}$ with $n$ dominating vertices.}
\end{figure}

Next, we examine the graph $P_e(\mathbb{Z}_p\times\mathbb{Z}_p\times \mathbb{Z}_n)$, $n$ is odd , as depicted in Figure 4.

\begin{theorem}
     Let $G$ be a group, $p$ a prime, and $n > 0$ an integer such that $n$ and $p$ are coprime. Then $P_e(G)\cong P_e(\mathbb{Z}_p\times\mathbb{Z}_p\times \mathbb{Z}_n)$ if and only if $G\cong \mathbb{Z}_p\times\mathbb{Z}_p\times \mathbb{Z}_n$. 
\end{theorem}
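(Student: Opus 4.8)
The plan is to follow the same template as the two preceding theorems: reduce the problem to the Sylow subgroups via Theorem \ref{nilpotent}, then identify each Sylow subgroup from the isomorphism type of its enhanced power graph. The reverse implication is immediate, so I would concentrate on the forward direction. Since $\mathbb{Z}_p\times\mathbb{Z}_p\times\mathbb{Z}_n$ is abelian and hence nilpotent, Lemma \ref{lemma1} forces $G$ to be nilpotent, so $G$ decomposes as the direct product of its Sylow subgroups. Writing $n=\prod_{q\mid n}q^{k}$ with each prime $q$ coprime to $p$, the Sylow $p$-subgroup of $\mathbb{Z}_p\times\mathbb{Z}_p\times\mathbb{Z}_n$ is $\mathbb{Z}_p\times\mathbb{Z}_p$ (of order $p^{2}$), while for each prime $q\mid n$ the Sylow $q$-subgroup is the cyclic group $\mathbb{Z}_{q^{k}}$. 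By Theorem \ref{nilpotent}, the enhanced power graph of each Sylow subgroup of $G$ is isomorphic to that of the corresponding factor above.

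For the primes $q\mid n$ the argument is identical to the earlier theorems: $P_e(\mathbb{Z}_{q^{k}})$ is the complete graph $K_{q^{k}}$, and since a group has a complete enhanced power graph if and only if it is cyclic, the matching Sylow $q$-subgroup $G_{Q}$ of $G$ must satisfy $G_{Q}\cong\mathbb{Z}_{q^{k}}$. The only genuinely new step is the identification of the Sylow $p$-subgroup $G_{P}$, a group of order $p^{2}$ whose enhanced power graph is isomorphic to $P_e(\mathbb{Z}_p\times\mathbb{Z}_p)$. This is where I expect the (modest) main work to lie. I would first describe $P_e(\mathbb{Z}_p\times\mathbb{Z}_p)$ explicitly: every non-identity element has order $p$, two distinct non-identity elements are adjacent precisely when they lie in a common subgroup of order $p$, and there are $p+1$ such subgroups, pairwise meeting only in the identity. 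Hence $P_e(\mathbb{Z}_p\times\mathbb{Z}_p)$ consists of the identity as a dominating vertex joined to $p+1$ disjoint cliques, each of size $p-1$ (the picture in Figure 4). In particular, for $p\geq 2$ this graph is not complete.

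Since there are exactly two groups of order $p^{2}$ up to isomorphism, namely the cyclic group $\mathbb{Z}_{p^{2}}$ and the elementary abelian group $\mathbb{Z}_p\times\mathbb{Z}_p$, I would distinguish them by their enhanced power graphs: $P_e(\mathbb{Z}_{p^{2}})=K_{p^{2}}$ is complete, whereas $P_e(\mathbb{Z}_p\times\mathbb{Z}_p)$ is not. Because $P_e(G_{P})\cong P_e(\mathbb{Z}_p\times\mathbb{Z}_p)$ is therefore non-complete, $G_{P}$ cannot be cyclic, so $G_{P}\cong\mathbb{Z}_p\times\mathbb{Z}_p$. Equivalently, one may invoke that isomorphic enhanced power graphs force the two groups to have the same number of elements of each order, so $G_{P}$ has no element of order $p^{2}$ and thus has exponent $p$. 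Assembling the Sylow factors then yields
\[
G\cong(\mathbb{Z}_p\times\mathbb{Z}_p)\times\prod_{q\mid n}\mathbb{Z}_{q^{k}}\cong\mathbb{Z}_p\times\mathbb{Z}_p\times\mathbb{Z}_n,
\]
completing the argument. The main obstacle is simply obtaining the clean combinatorial description of $P_e(\mathbb{Z}_p\times\mathbb{Z}_p)$ and confirming it is non-complete, which is exactly what separates the two groups of order $p^{2}$.
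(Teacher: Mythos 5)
Your proposal is correct and follows essentially the same route as the paper's proof: reduce to Sylow subgroups via Lemma \ref{lemma1} and Theorem \ref{nilpotent}, identify the Sylow $q$-subgroups for $q\mid n$ by completeness of $P_e(\mathbb{Z}_{q^k})$, and distinguish $\mathbb{Z}_p\times\mathbb{Z}_p$ from $\mathbb{Z}_{p^2}$ by the non-completeness of its enhanced power graph. The extra detail you supply (the explicit clique decomposition of $P_e(\mathbb{Z}_p\times\mathbb{Z}_p)$ and the alternative element-order argument) only elaborates what the paper states more briefly.
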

\begin{proof}
 Since $\mathbb{Z}_p\times\mathbb{Z}_p\times\mathbb{Z}_n$ is nilpotent, by Lemma \ref{lemma1}, $G$ is also nilpotent. Now by Theorem \ref{nilpotent}, we search for Sylow subgroups $\mathbb{Z}_p\times\mathbb{Z}_p\times\mathbb{Z}_n$.  Its Sylow subgroups are the following:
 \begin{enumerate}
     \item[i.] A Sylow subgroup of order $p^2$ isomorphic to $\mathbb{Z}_p\times\mathbb{Z}_p$. Thus by Theorem \ref{nilpotent}, $G$ has a $p$-Sylow subgroup $G_{P}$ such that $P_e(G_P)\cong P_e(\mathbb{Z}_p\times\mathbb{Z}_p)$. Since there are exactly two non-isomorphic groups of order $p^2$, $\mathbb{Z}_p\times\mathbb{Z}_p$ and $\mathbb{Z}_{p^2}$ and $P_e(\mathbb{Z}_{p^2})$ is a complete graph, we conclude that $G_P\cong \mathbb{Z}_p\times \mathbb{Z}_p$.
     \item[ii.] For each prime $q$ dividing $n$, a Sylow $q$-subgroup of $\mathbb{Z}_p\times \mathbb{Z}_p\times \mathbb{Z}_n$ isomorphic to $\mathbb{Z}_{q^k}$. Thus $G$ has a Sylow $q$-subgroup $G_P$, for each prime $q$ dividing $n$, such that $P_e(G_P)\cong P_e(\mathbb{Z}_{q^k})$. Since, $P_e(\mathbb{Z}_{q^k})$ is a complete graph  $G_P \cong \mathbb{Z}_{q^k} $.
 \end{enumerate}
 Therefore, we express $G$ as a direct product of its Sylow subgroups. For each prime $q$ dividing $n$, $G\cong \mathbb{Z}_p\times \mathbb{Z}_p\times \mathbb{Z}_{q^k}\cong  \mathbb{Z}_p\times \mathbb{Z}_p\times \mathbb{Z}_n$.
\end{proof}

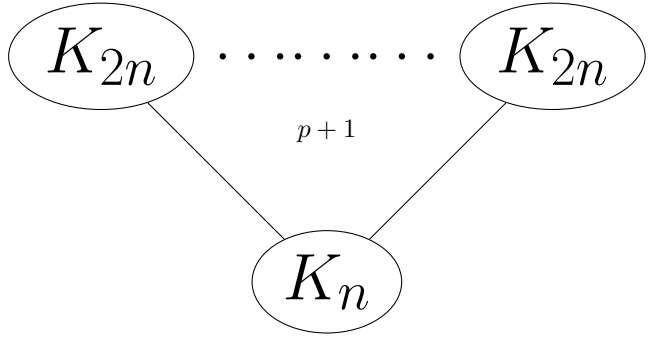
\begin{figure}[h]
\begin{center}
    \begin{tikzpicture}
        \def\ovalW{1.5}  
        \def\ovalH{1}    
        
        \node[draw, ellipse, minimum width=\ovalW cm, minimum height=\ovalH cm] (C) at (0,0) {\Huge $K_{n}$};
        
        \node[draw, ellipse, minimum width=\ovalW cm, minimum height=\ovalH cm] (A) at (3,3) {\Huge $K_{2n}$};

        \node (S) at (-1,3) {\Huge $\ldots$};
        \node (S) at (0,3) {\Huge $\ldots$};
        \node (S) at (0,2) { $p+1$};
        \node (S) at (1,3) {\Huge $\ldots$};
        \node[draw, ellipse, minimum width=\ovalW cm, minimum height=\ovalH cm] (B) at (-3,3) {\Huge $K_{2n}$};

        \draw (C) -- (A);
        \draw (C) -- (B);

    \end{tikzpicture}
\end{center}
\caption{The enchanced graph of $ \mathbb{Z}_p\times \mathbb{Z}_p\times \mathbb{Z}_{n}$, where $gcd(n,p)=1$, consist of $p+1$ complete graph $K_{2n}$ with $n$ dominating vertices.}
\end{figure}

Now by Lemma \ref{lem2}, we conclude that the same result holds for the power graphs.
\begin{corollary}
     Let $G$ be a group, $p$ a prime, and $n > 0$ an integer such that $n$ and $p$ are coprime. Then $Pow(G)\cong Pow(\mathbb{Z}_p\times \mathbb{Z}_p\times \mathbb{Z}_n)$ if and only if $G\cong \mathbb{Z}_p\times \mathbb{Z}_p\times \mathbb{Z}_n$. 
\end{corollary}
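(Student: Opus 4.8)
The plan is to deduce this corollary directly from the enhanced-power-graph theorem proved immediately above, using the equivalence between power-graph isomorphism and enhanced-power-graph isomorphism recorded in Corollary~\ref{cor}. Since every group considered in this paper is finite, Corollary~\ref{cor} applies verbatim, and it is the only bridging tool needed: all of the substantive group-theoretic and combinatorial work has already been carried out in the preceding theorem on $P_e(\mathbb{Z}_p\times\mathbb{Z}_p\times\mathbb{Z}_n)$.

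For the forward direction, I would assume $Pow(G)\cong Pow(\mathbb{Z}_p\times\mathbb{Z}_p\times\mathbb{Z}_n)$. By Corollary~\ref{cor} this isomorphism is equivalent to $P_e(G)\cong P_e(\mathbb{Z}_p\times\mathbb{Z}_p\times\mathbb{Z}_n)$, whereupon the preceding theorem forces $G\cong\mathbb{Z}_p\times\mathbb{Z}_p\times\mathbb{Z}_n$. The converse is immediate, since isomorphic groups have identical, and hence isomorphic, power graphs.

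There is essentially no obstacle here; the only thing to be careful about is to cite the correct bridging result. One could in principle route through Lemma~\ref{lem2} instead, but Corollary~\ref{cor} is the cleaner choice because it converts directly between the power graph and the enhanced power graph without any bookkeeping on the sizes of the cyclic-vertex sets $\mathrm{Cyc}(G)$ and $\mathrm{Cyc}(\mathbb{Z}_p\times\mathbb{Z}_p\times\mathbb{Z}_n)$. Thus the entire argument is a two-line reduction to results already established.
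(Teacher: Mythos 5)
Your proof is correct and matches the paper's approach: the paper also derives this corollary as an immediate consequence of the preceding enhanced-power-graph theorem via the equivalence between power-graph and enhanced-power-graph isomorphism. In fact, your citation of Corollary~\ref{cor} is the right one — the paper's one-line derivation nominally cites Lemma~\ref{lem2} (which concerns cyclic graphs), evidently a citation slip, since the three analogous corollaries elsewhere in Section~2.2 all invoke Corollary~\ref{cor} exactly as you do.
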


\begin{theorem}
     Let $G$ be any group. Then  $P_e(G)\cong 
 P_e(\mathbb{Z}_p\times \mathbb{Z}_p\times \mathbb{Z}_p\times \mathbb{Z}_n)$ for a prime number $p > 2$
 and an integer $n > 0$ such that $n$ and $p$ are coprime iff 
 $G\cong\mathbb{Z}_p\times\mathbb{Z}_p \times\mathbb{Z}_p\times \mathbb{Z}_n$ or $K\times\mathbb{Z}_n$, where $K$ represents the unique non-abelian group of order $p^3$ and exponent $p$.
.
\end{theorem}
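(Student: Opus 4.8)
The plan is to follow the same template as the preceding theorems, reducing the problem to a question about Sylow $p$-subgroups via Theorem \ref{nilpotent}, and then to identify exactly which groups of order $p^3$ share the enhanced power graph of $\mathbb{Z}_p\times\mathbb{Z}_p\times\mathbb{Z}_p$. First I would establish the forward direction: assuming $P_e(G)\cong P_e(\mathbb{Z}_p\times\mathbb{Z}_p\times\mathbb{Z}_p\times\mathbb{Z}_n)$, Lemma \ref{lemma1} gives that $G$ is nilpotent, so it decomposes as the direct product of its Sylow subgroups, and Theorem \ref{nilpotent} forces each Sylow subgroup of $G$ to have enhanced power graph isomorphic to the corresponding Sylow subgroup of the target. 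For every prime $q\mid n$ the relevant Sylow subgroup of the target is cyclic, so its enhanced power graph is complete; since the enhanced power graph is complete exactly when the group is cyclic, the matching Sylow $q$-subgroup of $G$ is $\mathbb{Z}_{q^k}$, and these assemble into a factor $\mathbb{Z}_n$. The whole problem therefore reduces to determining the Sylow $p$-subgroup $G_P$ of $G$, which must satisfy $P_e(G_P)\cong P_e(\mathbb{Z}_p\times\mathbb{Z}_p\times\mathbb{Z}_p)$.

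The heart of the argument is the analysis of $G_P$. Since isomorphic enhanced power graphs force the two groups to have the same number of elements of each order (the fact already invoked in the proof of Lemma \ref{lemma1}), and since every non-identity element of $\mathbb{Z}_p\times\mathbb{Z}_p\times\mathbb{Z}_p$ has order $p$, I conclude that $|G_P|=p^3$ and that $G_P$ has exponent $p$. For the odd prime $p>2$ there are exactly two isomorphism classes of groups of order $p^3$ and exponent $p$: the elementary abelian group $\mathbb{Z}_p\times\mathbb{Z}_p\times\mathbb{Z}_p$ and the non-abelian group $K$ of exponent $p$ (here the hypothesis $p>2$ is essential, since a group of exponent $2$ is necessarily abelian). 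Hence $G_P\cong\mathbb{Z}_p\times\mathbb{Z}_p\times\mathbb{Z}_p$ or $G_P\cong K$, and combining with the cyclic factor gives $G\cong\mathbb{Z}_p\times\mathbb{Z}_p\times\mathbb{Z}_p\times\mathbb{Z}_n$ or $G\cong K\times\mathbb{Z}_n$.

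For the converse, by Theorem \ref{nilpotent} it suffices to check that $P_e(K)\cong P_e(\mathbb{Z}_p\times\mathbb{Z}_p\times\mathbb{Z}_p)$, the cyclic factors contributing identical complete Sylow graphs. The key observation is that for any $p$-group of exponent $p$, two distinct non-identity elements $x,y$ satisfy that $\langle x,y\rangle$ is cyclic if and only if $y\in\langle x\rangle$, because every cyclic subgroup has order $p$; consequently the enhanced power graph of such a group is the identity vertex (which is dominating) joined to a disjoint union of cliques $K_{p-1}$, one for each subgroup of order $p$. Both $\mathbb{Z}_p\times\mathbb{Z}_p\times\mathbb{Z}_p$ and $K$ have exponent $p$ and order $p^3$, hence exactly $(p^3-1)/(p-1)=p^2+p+1$ subgroups of order $p$, so their enhanced power graphs coincide, namely $\{e\}\vee\big(\bigsqcup_{p^2+p+1}K_{p-1}\big)$.

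I expect the main obstacle to be twofold. First, one must invoke the classification of groups of order $p^3$ of exponent $p$ to guarantee that only $\mathbb{Z}_p\times\mathbb{Z}_p\times\mathbb{Z}_p$ and $K$ can occur as $G_P$; this is where the role of $p>2$ must be highlighted carefully, since for $p=2$ the analogous list collapses to the abelian group alone. Second, one must verify cleanly that exponent-$p$ groups of equal order always produce isomorphic enhanced power graphs, so that $\mathbb{Z}_p\times\mathbb{Z}_p\times\mathbb{Z}_p$ and $K$ genuinely cannot be distinguished by $P_e$, which explains why both appear in the conclusion and why the characterization cannot be sharpened to a single group.
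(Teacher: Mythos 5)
Your proposal is correct, and its forward direction coincides with the paper's own proof: nilpotency of $G$ via Lemma \ref{lemma1}, reduction to Sylow subgroups via Theorem \ref{nilpotent}, identification of the Sylow $q$-subgroups for $q\mid n$ as cyclic because their enhanced power graphs are complete, and identification of $G_P$ among the groups of order $p^3$ by the element-order distribution (all non-identity elements of order $p$, which for $p>2$ leaves exactly $\mathbb{Z}_p\times\mathbb{Z}_p\times\mathbb{Z}_p$ and $K$). Where you genuinely go beyond the paper is the converse. The paper's proof establishes only the forward implication and never verifies that $K\times\mathbb{Z}_n$ actually has enhanced power graph isomorphic to that of $\mathbb{Z}_p\times\mathbb{Z}_p\times\mathbb{Z}_p\times\mathbb{Z}_n$; it implicitly leans on the same-order-distribution observation (which is necessary, not sufficient, for graph isomorphism) and on the $p=3$ example of Figure 1. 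Your argument that in a group of exponent $p$ two distinct non-identity elements $x,y$ are adjacent in $P_e$ iff $\langle x\rangle=\langle y\rangle$, so that the enhanced power graph of any such group of order $p^3$ is the identity vertex joined to $(p^3-1)/(p-1)=p^2+p+1$ disjoint cliques $K_{p-1}$, proves $P_e(K)\cong P_e(\mathbb{Z}_p\times\mathbb{Z}_p\times\mathbb{Z}_p)$ for every odd prime $p$; combined with Theorem \ref{nilpotent} this cleanly completes the ``if'' direction of the stated equivalence. In short, your write-up follows the paper's route but is more complete, closing a step the paper's proof leaves unjustified, and it also makes explicit why the hypothesis $p>2$ is needed (for $p=2$ the exponent-$p$ classification collapses to the abelian case).
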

\begin{proof}
    Note that $\mathbb{Z}_p\times\mathbb{Z}_p\times \mathbb{Z}_p \times \mathbb{Z}_n$ is nilpotent when $gcd(n,p) = 1$. By Corollary \ref{lemma1}, $G$ is nilpotent. Now  by theorem \ref{nilpotent}, consider the Sylow subgroups of $\mathbb{Z}_p \times \mathbb{Z}_p\times\mathbb{Z}_p\times \mathbb{Z}_n$ and $G$.
    \begin{enumerate}
        \item  Since $\mathbb{Z}_p\times \mathbb{Z}_p\times \mathbb{Z}_p\times\mathbb{Z}_n$ has a Sylow $p$-subgroup isomorphic to $\mathbb{Z}_p\times\mathbb{Z}_p\times \mathbb{Z}_p$, $G$ also has a Sylow $p$-subgroup $G_{P}$  which $P_e(\mathbb{Z}_p \times\mathbb{Z}_p \times \mathbb{Z}_p)\cong P_e(G_P)$. Thus $\mathbb{Z}_p\times \mathbb{Z}_p\times\mathbb{Z}_p $ and $G_P$ have the same numbers of elements of each order, identity of order 1 and $p^3-1$ elements of order $p$. 
        Among the five possible groups of order $p^3$, two exhibit the same element order distribution as  $\mathbb{Z}_p \times\mathbb{Z}_p\times \mathbb{Z}_p $:  the abelian group $\mathbb{Z}_p\times\mathbb{Z}_p\times\mathbb{Z}_p $ itself and the unique non-abelian group of order  $p^3$ with exponent $p$, denoted by $K$, which is defined by the presentation 
        $$\langle x, y, z \ |\ x^p=y^p=z^p=1, [x,y]=z, [x,z]= [y,z]=1\rangle$$
 Therefore from $P_e(\mathbb{Z}_p\times\mathbb{Z}_p\times \mathbb{Z}_p)\cong P_e(G_P)$ we conclude that $G_P\cong \mathbb{Z}_p \times \mathbb{Z}_p \times \mathbb{Z}_p $ or $G_P \cong K$
        \item  For each prime $q$ dividing $n$,  $\mathbb{Z}_p \times \mathbb{Z}_p \times\mathbb{Z}_p\times \mathbb{Z}_n$ has a Sylow $q$-subgroup isomorphic to $\mathbb{Z}_{q^k}$. Thus, $G$ has a  Sylow $q$-subgroup $G_P$ such that $P_e(G_P)\cong P_e(\mathbb{Z}_{q^k})$ which implies $G_P\cong Z_{q^k} $, for each  prime $q$ dividing $n$. 
    \end{enumerate}
    Now, we express $G$ as a direct product of its Sylow subgroups. For each  prime $q$ dividing $n$,
      $G\cong \mathbb{Z}_p\times\mathbb{Z}_p\times \mathbb{Z}_p\times\mathbb{Z}_{q^k}\cong \mathbb{Z}_p\times \mathbb{Z}_p \times \mathbb{Z}_p\times\mathbb{Z}_n$ or $G\cong K\times \mathbb{Z}_{q^k}\cong K\times \mathbb{Z}_n$. 
  \end{proof}
  Now by Corollary \ref{cor}, we conclude that the same result holds for the power graphs.
\begin{corollary}
     Let $G$ be a group. $Pow(G)\cong 
 Pow(\mathbb{Z}_p\times\mathbb{Z}_p\times\mathbb{Z}_p\times \mathbb{Z}_n)$ for a prime number $p > 2$
 and an integer $n > 0$ such that $gcd(n,p) = 1$ if and only if $G\cong \mathbb{Z}_p\times\mathbb{Z}_p\times\mathbb{Z}_p \times \mathbb{Z}_n$ or $K\times\mathbb{Z}_n$, where $K$ is unique non-abelian group of order $p^{3}$ and exponent $p$.
\end{corollary}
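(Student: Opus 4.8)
The plan is to reduce the statement entirely to the immediately preceding theorem (the characterization of groups $G$ with $P_e(G)\cong P_e(\mathbb{Z}_p\times\mathbb{Z}_p\times\mathbb{Z}_p\times\mathbb{Z}_n)$) by invoking the equivalence between power-graph isomorphism and enhanced-power-graph isomorphism recorded in Corollary \ref{cor}. The corollary is genuinely a direct consequence, so the proof is short: the substantive work was already carried out at the level of enhanced power graphs, and Corollary \ref{cor} transfers it verbatim to power graphs.

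Concretely, I would argue as follows. For the forward direction, suppose $Pow(G)\cong Pow(\mathbb{Z}_p\times\mathbb{Z}_p\times\mathbb{Z}_p\times\mathbb{Z}_n)$ with $p>2$ a prime and $\gcd(n,p)=1$. By Corollary \ref{cor} (the equivalence of conditions (i) and (iii) applied to $G$ and $H=\mathbb{Z}_p\times\mathbb{Z}_p\times\mathbb{Z}_p\times\mathbb{Z}_n$), this is equivalent to $P_e(G)\cong P_e(\mathbb{Z}_p\times\mathbb{Z}_p\times\mathbb{Z}_p\times\mathbb{Z}_n)$. The preceding theorem then yields exactly that $G\cong\mathbb{Z}_p\times\mathbb{Z}_p\times\mathbb{Z}_p\times\mathbb{Z}_n$ or $G\cong K\times\mathbb{Z}_n$, where $K$ is the unique non-abelian group of order $p^3$ and exponent $p$.

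For the converse, I would simply reverse the implications. If $G$ is isomorphic to either $\mathbb{Z}_p\times\mathbb{Z}_p\times\mathbb{Z}_p\times\mathbb{Z}_n$ or $K\times\mathbb{Z}_n$, then by the preceding theorem we have $P_e(G)\cong P_e(\mathbb{Z}_p\times\mathbb{Z}_p\times\mathbb{Z}_p\times\mathbb{Z}_n)$, and applying Corollary \ref{cor} once more returns $Pow(G)\cong Pow(\mathbb{Z}_p\times\mathbb{Z}_p\times\mathbb{Z}_p\times\mathbb{Z}_n)$. This closes the biconditional.

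There is no real obstacle at the level of this corollary itself; the only point requiring care is that Corollary \ref{cor} is an equivalence valid for \emph{all} finite groups with no restriction on $G$, so it legitimately lets one pass between the two graph invariants in both directions without reintroducing hypotheses. The conceptual difficulty — determining which groups $K$ of order $p^3$ share the element-order distribution of the elementary abelian group and hence the same enhanced power graph — was already resolved in the preceding theorem, and nothing further is needed here beyond the clean application of that equivalence.
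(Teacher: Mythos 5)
Your proof is correct and follows exactly the paper's route: the paper likewise derives this corollary in one step from the preceding enhanced-power-graph theorem via Corollary \ref{cor} (the Zahirovi\'{c}--Bo\v{s}njak--Madar\'{a}sz equivalence), applied in both directions of the biconditional. Nothing is missing, and your remark that Corollary \ref{cor} holds for all finite groups without extra hypotheses is the right point of care.
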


\textbf{Remark.} Similar to the section 2.1, we aim to investigate the $C(G)\cong C(H)$ when $H$ is is one of the following groups: $Q_8\times \mathbb{Z}_n$ ($n$ is odd), $(\prod_{i=1}^m \mathbb{Z}_2)\times\mathbb{Z}_n$ ($n$ is odd), $\mathbb{Z}_p\times \mathbb{Z}_p\times\mathbb{Z}_n$ ($gcd(n, p) = 1$).
In \cite{abdollahi2007noncyclic}, Abdollahi et al. demonstrated that if $G$ is a group whose non-cyclic graph is isomorphic to the non-cyclic graph of one of these groups, then it follows that $G \cong H$.
 
Since graph isomorphism also extends to the complement of the non-cyclic graph, it follows that  $C(G)\cong C(H)$ if and only if $G\cong H$, where $H$ belongs to the aforementioned family of groups. 
Furthermore, in \cite{abdollahi2007noncyclic}, it was also proven that for a prime $p > 2$ and an positive integer $n$ such that $gcd(n,p) = 1$, the non-cyclic graph $G$ isomorphic the non-cyclic graph $(\mathbb{Z}_p\times \mathbb{Z}_p\times \mathbb{Z}_p\times \mathbb{Z}_n)$ if and only if 
 $G\cong \mathbb{Z}_p \times\mathbb{Z}_p \times \mathbb{Z}_p \times\mathbb{Z}_n$ or $K\times \mathbb{Z}_n$, where $K$ is the unique non-abelian group with order $p^3$ and exponent $p$.
Consequently, the same conclusion holds for the cyclic graph.

\section*{Acknowledgment}
 We wish to express our sincere gratitude to the Visiting or Sabbatical Scientist Support Program for their invaluable assistance. Furthermore, the first author acknowledges the financial support provided by the Scientific and Technological Research Council of Turkey (TUBITAK) through the Bideb 2221 program.

\end{document}